\newtheorem{theorem}{Theorem}[section]
\newtheorem{proposition}[theorem]{Proposition}
\newtheorem{lemma}[theorem]{Lemma}
\newtheorem{corollary}[theorem]{Corollary}
\theoremstyle{definition}
\newtheorem{definition}[theorem]{Definition}
\theoremstyle{plain}
\numberwithin{equation}{theorem}
\theoremstyle{remark}
\newif\ifhascomments \hascommentstrue
  \newcommand{\dragos}[1]{{\color{red}[[\ensuremath{\bigstar\bigstar\bigstar} #1]]}}
  \newcommand{\matt}[1]{{\color{red}[[\ensuremath{\spadesuit\spadesuit\spadesuit} #1]]}}
  \newcommand{\dragos}[1]{}
  \newcommand{\matt}[1]{}
\begin{document}

\title[On free subgroups in division rings]{On free subgroups in division rings}

\author{Jason P. Bell}
\address{Department of Pure Mathematics\\
University of Waterloo\\
Waterloo, ON N2L 3G1\\
Canada}
\email{jpbell@uwaterloo.ca}

\author{Jairo Gon\c calves}
\address{
Department of Mathematics \\
University of Sa\~ o Paulo\\
 Sa\~ o Paulo, SP, 05508-090\\
  Brazil}
\email{jz.goncalves@usp.br}

\begin{abstract} Let $K$ be a field and let $\sigma$ be an automorphism and let $\delta$ be a $\sigma$-derivation of $K$.  Then we show that the multiplicative group of nonzero elements of the division ring $D=K(x;\sigma,\delta)$ contains a free non-cyclic subgroup unless $D$ is commutative, answering a special case of a conjecture of Lichtman.  As an application, we show that division algebras formed by taking the Goldie ring of quotients of group algebras of torsion-free non-abelian solvable-by-finite groups always contain free non-cyclic subgroups. \end{abstract}

\subjclass[2010]{12E15, 16K40, 20E05}
%16D60 %Global ground fields
%16A20, %Linear algebraic groups over arbitrary fields
%16A32. %Iteration problems

\keywords{Division rings, free groups, group algebras, solvable-by-finite groups, Ore extensions}

\thanks{}

\maketitle

\section{Introduction}
Let $D$ be a division ring and let $D^{*}=D \setminus \{0\}$ the multiplicative group of $D$. The existence of free subobjects in $D$ and $D^*$ has been studied in several different contexts (see, for example, \cite{ML1, ML2, ML3, ML4, ML5, Lor86, Li2, Li3, FGF18}).  The most important of these are the study of free multiplicative semigroups of rank two in $D$, free non-cyclic subgroups of $D^*$, and free subalgebras of rank two in $D$. The guiding philosophy in all of these conjectures is that division rings that are not locally finite-dimensional over their centres should be very large and unwieldy and have big subgroups and big subalgebras. The question of the existence of free non-cyclic subgroups in the nonzero elements of a division ring $D$, goes back to a work of Lichtman \cite{Li}, who conjectured that if $D$ is a division ring that is not a field then $D^*$ contains a free group of rank two. This is known in several cases, including the case when $D$ is finite-dimensional over its centre, where it can be deduced from the Tits alternative for linear groups \cite{Li2, Gon}.  Beyond this case, Lichtman's conjecture has been verified for several special cases of algebras, including division rings whose multiplicative group contains a non-abelian nilpotent-by-finite subgroup \cite{Li2}, division algebras of polycyclic-by-finite group algebras \cite{Li3}, quotient division rings of enveloping algebras of finite-dimensional or solvable non-abelian Lie algebras over a field of characteristic zero \cite{Li4}. In terms of general results, arguably the best result concerning the existence of free subgroups in division rings is a result of Chiba's \cite{Chi}, who shows that if $D$ is a division ring with uncountable centre then $D^*$ contains a free group.  In this note, we prove Lichtman's conjecture in two cases that apply to a broad class of division rings: Ore extensions of fields and group algebras of torsion-free solvable-by-finite groups.

We recall that if $K$ is a field and $\sigma$ is an automorphism of $K$ and $\delta:K\to K$ is a $\sigma$-derivation (that is, $\delta(ab)=\delta(a)b+\sigma(a)\delta(b)$ for $a,b\in K$)
then we can form a skew polynomial ring $K[x;\sigma,\delta]$, which is, as a set, just the polynomial ring $K[x]$ but with ``skewed'' multiplication given by the rule that multiplication of elements of $K$ remains the same, but for $\alpha\in K$ and $x$ the indeterminate we have the rule
$x\cdot \alpha = \sigma(\alpha) x+\delta(\alpha)$. This ring is a noetherian domain and in a semiprime noetherian ring $R$ one can invert the regular elements (non-zero divisors of $R$) to obtain a quotient division ring ${\rm Frac}(R)$, which is semiprime Artinian and which we think of as being a ``noncommutative field of fractions.''  In the case when $R=K[x;\sigma,\delta]$, we let $K(x;\sigma,\delta)$ denote ${\rm Frac}(R)$, and this quotient ring is a division ring that is not a field unless $\sigma$ is the identity and $\delta=0$.  In the case that $\delta=0$ we simply write $K[x;\sigma]$ for $K[x;\sigma,\delta]$ and call this an Ore extension of \emph{automorphism type}.  In the case when $\sigma$ is the identity, $\delta$ is just an ordinary derivation and we write $K[x;\delta]$ for $K[x;\sigma,\delta]$ and call this an Ore extension of \emph{derivation type}.  Our main result is the following theorem. 

\begin{theorem}
\label{thm: main1}
Let $K$ be a field, let $\sigma$ be an automorphism of $K$ and let $\delta$ be a $\sigma$-derivation of $K$.  Then if  the division ring $K(x;\sigma,\delta)$ is not commutative then the group of units of $K(x;\sigma,\delta)$ contains a free non-cyclic subgroup.
\end{theorem}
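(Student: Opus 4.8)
The plan is to reduce first to the two pure types, dispatch the ``small'' cases by quoting the results recalled in the introduction, and isolate one genuinely infinite-dimensional case as the crux. Note that $K(x;\sigma,\delta)$ is noncommutative exactly when $(\sigma,\delta)\neq(\id,0)$. If $\sigma\neq\id$, I would first eliminate $\delta$: choosing $c\in K$ with $\sigma(c)\neq c$ and setting $f=\delta(c)(c-\sigma(c))^{-1}$, the $\sigma$-derivation $\delta-(a\mapsto fa-\sigma(a)f)$ kills $c$, and a one-line commutator computation using that $K$ is commutative forces it to vanish identically; hence every $\sigma$-derivation of a field is inner once $\sigma\neq\id$, and the substitution $x\mapsto x-f$ turns $K(x;\sigma,\delta)$ into the automorphism-type extension $K(x;\sigma)$. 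This leaves two situations: automorphism type $K(x;\sigma)$ with $\sigma\neq\id$, and derivation type $K(x;\delta)$ with $\sigma=\id$ and $\delta\neq0$.

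Next I would split along finite- versus infinite-dimensionality over the center. If $\sigma$ has finite order $n$, then $F:=K^{\langle\sigma\rangle}$ has $[K:F]=n$, the element $x^{n}$ is central, and $K(x;\sigma)$ is a degree-$n^{2}$ division algebra over its center; likewise, in characteristic $p$ a derivation with $[K:K^\delta]<\infty$ (automatic when $K$ is finitely generated) makes $K(x;\delta)$ finite-dimensional over its center. In every finite-dimensional noncommutative case the conclusion is already known, through the Tits alternative applied to $D^{*}$ as a linear group \cite{Li2,Gon}, so I would simply cite it. For derivation type in characteristic zero I would reduce to the Weyl field: a derivation vanishes on the algebraic closure of $\Q$ in $K$, so any $t$ with $\delta(t)\neq0$ is transcendental, and $y:=\delta(t)^{-1}x$ satisfies $[y,t]=1$; simplicity of the Weyl algebra $A_1$ identifies $\Q\langle y,t\rangle$ with $A_1$ and embeds $\Frac(A_1)$ into $K(x;\delta)$. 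Since $\Frac(A_1)$ is the Goldie quotient ring of the enveloping algebra of the two-dimensional non-abelian solvable Lie algebra, its units contain a free subgroup by \cite{Li4}, and the conclusion descends to $K(x;\delta)^{*}$.

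The crux is automorphism type with $\sigma$ of infinite order, where $D=K(x;\sigma)$ is infinite-dimensional over its center $K^{\langle\sigma\rangle}$ and none of the quoted theorems applies. Here I would embed $D$ into the skew Laurent series division ring $K((x;\sigma))$, in which $1+x$ is inverted by the geometric series, and look for free generators of Magnus type. Writing a length-$n$ word in $x$ and $bx$ as $\big(\prod_{i\in S}\sigma^{i-1}(b)\big)x^{n}$ indexed by $S\subseteq\{1,\dots,n\}$, the elements $1+x$ and $1+bx$ generate a free subgroup as soon as these coefficients are linearly independent over the prime field for every $n$, since the continuous Magnus embedding of the free power series algebra is then injective on each graded piece; crucially this works in every characteristic. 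Such independence holds, for instance, whenever some $b\in K$ has an algebraically independent $\sigma$-orbit, which I would try to secure whenever $\sigma$ is ``transcendentally active.'' In the remaining $\sigma$-algebraic situation I would instead reduce to a finitely generated $\sigma$-stable subfield and induct on its transcendence degree: after possibly replacing $\sigma$ by a power, choose a $\sigma$-invariant rank-one valuation of $K$, extend it to $D$ with $x$ a unit, and pass to the residue division ring $k_1(\bar x;\sigma_1)$; if $\sigma_1$ has finite order the residue is finite-dimensional over its center and a free subgroup lifts through the valuation, and otherwise one recurses on a field of smaller transcendence degree.

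The step I expect to be genuinely hard is this $\sigma$-algebraic case and, above all, its base cases, where $K$ is algebraic over the prime field yet $\sigma$ still has infinite order --- typically $\overline{\Fp}$ with a power of Frobenius, or an infinite-order automorphism of an algebraic extension of $\Q$. There the transcendence induction bottoms out, the orbits offer no algebraic independence to drive the Magnus argument, and $D$ is neither finite-dimensional over its center (so the Tits route is closed) nor equipped with an uncountable center (so Chiba's theorem does not apply). Producing explicit free generators must then come from a genuine ping-pong argument organized by a valuation, and the unavailability of $\exp$ and $\log$ in positive characteristic removes the most convenient linearization tools. Turning this into a uniform statement --- ideally a single valuation-theoretic criterion yielding a free subgroup whenever $\sigma$ has infinite order --- is where I expect the decisive work, and the novelty of the theorem, to be concentrated.
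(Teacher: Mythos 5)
Your outer skeleton coincides with the paper's: the elimination of $\delta$ when $\sigma\neq\id$ via innerness of $\sigma$-derivations is exactly the folklore reduction the paper quotes (\cite[Lemma 5.1]{BR12}); the derivation case via the Weyl relation $[\delta(t)^{-1}x,t]=1$ is the paper's argument verbatim (though note your characteristic-$p$ detour through $[K:K^\delta]<\infty$ is unnecessary and leaves non-finitely-generated $K$ unaddressed — the copy of the Weyl division algebra over $\Fp$ is already finite over its centre, so the paper's single Weyl argument covers all characteristics); the finite-order case via the Tits alternative is Proposition \ref{prop:PI}; and your Magnus-type criterion for $1+x$, $1+bx$ when some orbit is transcendentally free plays the role of the paper's use of \cite[Proposition 3.2]{BR12} together with S\'anchez's theorem \cite{Sa} when the orbit field is not finitely generated. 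But the crux — $K$ finitely generated over its prime field with $\sigma$ of infinite order and no transcendental freedom in the orbits — is precisely where your proposal stops being a proof, as you yourself concede, so the proposal has a genuine gap at the decisive step.

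Two concrete points about that gap. First, you mislocate the difficulty: your proposed ``hard base cases'' ($\overline{\F}_p$ with Frobenius, or an algebraic extension of $\Q$ with an infinite-order automorphism) are in fact trivial, because the $\sigma$-orbit of an element $c$ algebraic over the prime field consists of conjugates of $c$ and is therefore finite; the subfield generated by one orbit is a finite field or a number field, on which $\sigma$ has finite order, and the PI case applies. The real difficulty is at positive transcendence degree, where $\sigma$ induces an infinite-order birational self-map of a variety over a finite field — the ``thorny dynamical problem'' the paper flags — and your valuation-theoretic induction offers no construction of a $\sigma$-invariant valuation with controlled residue dynamics. Second, the paper's resolution rests on two ingredients with no counterpart in your plan: (a) the pro-unipotent lifting lemma (Lemma \ref{lem:reduce}), which is why the paper proves the stronger Theorem \ref{thm: main2} asserting \emph{pro-unipotent} free generators, so that free subgroups can be pulled back along graded surjections $R[x;\sigma]\to S[x;\tau]$; and (b) Hrushovski's theorem on Frobenius twists (Theorem \ref{thm:Hrush}), which supplies a point $x$ with $\phi(x)=F^j(x)$ avoiding the fixed locus, indeterminacy locus, and polar loci; such a point has finite $\phi$-orbit, yielding a $\sigma$-invariant semilocal ring whose residue ring is a finite field carrying a nontrivial induced automorphism, whereupon the finite-order case plus lifting concludes. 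The characteristic-zero case is then handled not by induction on transcendence degree but by reduction modulo a prime (Lemma \ref{lem:S}), producing a $\sigma$-invariant local subring whose residue field is finitely generated over a finite field with nontrivial induced automorphism. Without substitutes for these two mechanisms — a specialization to the PI situation and a lifting device compatible with it — your argument does not close.
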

In fact, in the pure automorphism case the result we prove is somewhat stronger than Theorem \ref{thm: main1} (see the statement of Theorem \ref{thm: main2}).  The stronger form is useful in terms of the strategy of proof, because we show there are ``lifting'' techniques which allow one to pass to simple homomorphic images of the rings we consider.
 
Division rings of the form $K(x;\sigma,\delta)$ have been considered in this context (see, for example, \cite{BR12, Lor86, FGS, FGF18}), but results of the form given in Theorem \ref{thm: main1}, when $K$ is countable, have only been proved in a few instances where one has an explicit form of the automorphism or some other constraints.  Once one moves beyond division rings that are finite-dimensional over their centres, where Lichtman's conjecture is known to hold \cite{Li2, Gon}, the first natural class of division rings to consider are precisely those formed by Ore extensions of fields, since they in some sense provide the first examples of division rings that are infinite-dimensional over their centres that are not locally finite-dimensional. 

The main issue that arises in this context is that one quickly encounters thorny dynamical problems involving birational maps when studying the existence of free subgroups in this setting.  We are able to prove this result by first reducing it to the case of algebras of the form $K(x;\sigma)$ with $K$ a finitely generated extension of the prime field.  We then show that one can further reduce to the case when $K$ has positive characteristic.  Our main tool in this setting is the following deep result of Hrushovski's, which the authors learned of through an application of this result by Amerik \cite{Ame}.
\begin{theorem} \emph{(}\cite[Corollary 1.2]{Hr}\emph{)} Let $U$ be an affine variety over a finite field $\mathbb{F}_q$ and let $S \subseteq U^2$ be an irreducible subvariety over $\bar{\mathbb{F}}_q$. Assume that the two projections of $S$ to $U$ are dominant. Let $\phi_q$ denote the $q$-Frobenius map. Then for any proper subvariety $W$ of $U$, for sufficiently large $m$, there exists $x\in U(\bar{\mathbb{F}}_q)$ with $(x,\phi_q^m(x))\in S$ and $x\not\in W$.
\label{thm:Hrush}
\end{theorem}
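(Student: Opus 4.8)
The plan is to recognize that the existence statement is the qualitative shadow of a quantitative twisted Lang--Weil point-count, and to deduce it by comparing two such counts. For a subvariety $Z \subseteq U$ set
$N_m(Z) = \#\{x \in Z(\bar{\mathbb{F}}_q) : (x,\phi_q^m(x)) \in S\}$, so that the desired point $x \notin W$ exists precisely when $N_m(U) > N_m(W)$. Geometrically, $N_m(U)$ counts the $\bar{\mathbb{F}}_q$-points of the intersection of $S$ with the Frobenius graph $\Gamma_m = \{(x,\phi_q^m(x))\}$ inside $U \times U$. Thus the whole problem reduces to producing asymptotics, as $m \to \infty$, showing that $N_m(U)$ outgrows $N_m(W)$.

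The heart of the matter is the estimate for $N_m(U)$. I would realize the closure of $S$ as a cohomological correspondence acting on the compactly supported $\ell$-adic cohomology of $U$ and apply the Grothendieck--Lefschetz trace formula, writing $N_m(U)$ as an alternating sum of traces of $\phi_q^m$ twisted by the class of $S$. By Deligne's weight bounds the top cohomological degree dominates: $\phi_q^m$ acts on $H^{2\dim U}_c$ with eigenvalue $q^{m\dim U}$, and one expects $N_m(U) = c\, q^{m\dim U} + O\!\left(q^{m(\dim U - 1/2)}\right)$. The two hypotheses enter exactly in forcing $c > 0$. Dominance of the first projection, together with irreducibility of $S$, supplies the growth rate $q^{m\dim U}$ and makes the leading coefficient (morally the degree of the first projection) at least one. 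Dominance of the second projection is what guarantees that the roughly $q^{m\dim U}$ solutions are genuinely \emph{distinct}: it ensures the relevant Frobenius fixed-point scheme is reduced, ruling out the inseparable collapse exhibited by degenerate correspondences such as $S = U \times \{c\}$ with $c \in \mathbb{F}_q$, where the solution set is pinned to a single $\phi_q$-fixed point and the count never grows.

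It then remains to handle $W$. Restricting the source coordinate to a proper subvariety lowers the relevant dimension, and the same trace-formula analysis yields $N_m(W) = O\!\left(q^{m\dim W}\right)$ with $\dim W < \dim U$. Hence $N_m(W)/N_m(U) \to 0$, so $N_m(U) > N_m(W)$ for all sufficiently large $m$, producing the required $x \notin W$. The main obstacle is precisely the leading-term estimate with a positive coefficient and a uniform error: the trace formula bounds $N_m(U)$ from above without difficulty, but isolating the top-weight contribution, proving that its coefficient does not vanish (i.e.\ ruling out complete cancellation among the Frobenius eigenvalues), and controlling the error uniformly in $m$ is the delicate point. This is exactly where Hrushovski's difference-algebraic input is indispensable, namely the dimension theory of $\mathrm{ACFA}$ and the model theory of the Frobenius automorphism, or an equivalent positivity argument in intersection theory; I regard establishing that positivity, rather than the reduction to a point-count, as the genuinely hard step.
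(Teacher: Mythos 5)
First, a point of comparison that matters here: the paper contains no proof of Theorem \ref{thm:Hrush} at all --- it is quoted verbatim from Hrushovski \cite[Corollary 1.2]{Hr} and used as a black box, so your attempt has to be judged as a proof of Hrushovski's theorem itself. Your outline does correctly identify the shape of the known \emph{algebro-geometric} route to this result (the one via Deligne's conjecture on the trace formula for correspondences, proved by Fujiwara and Varshavsky, and carried out in full for the Hrushovski--Lang--Weil estimates by Shuddhodan and Varshavsky), which is genuinely different from Hrushovski's original model-theoretic argument via ACFA and ultraproducts of Frobenii. But as a proof it has concrete gaps. The central one: the Grothendieck--Lefschetz trace formula you invoke is not available naively in your setting. $U$ is affine, hence non-proper, and $S$ is a correspondence rather than a morphism, so the fixed points of $S$ twisted by $\phi_q^m$ are \emph{not} computed by an alternating sum of traces with naive local terms --- there can be non-naive local multiplicities and contributions at infinity. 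The assertion that for $m\gg 0$ the local terms become the naive multiplicities and the boundary contributions vanish is precisely Deligne's conjecture (Fujiwara's theorem), and combined with the positivity of the leading coefficient --- which you explicitly defer as ``the genuinely hard step'' --- this is the entire content of the theorem. In effect the proposal assumes what is to be proved; positivity is not obtained by ``ruling out cancellation among Frobenius eigenvalues'' but by identifying the leading coefficient with an intersection-theoretic degree (involving the degrees and inseparability degrees of the two projections), which requires the Fujiwara--Varshavsky machinery or Hrushovski's difference-algebraic substitute.

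Two further specific problems. The dimension bookkeeping fails as stated: Corollary 1.2 does not assume $\dim S=\dim U$, and when $\dim S>\dim U$ (for instance $U=\mathbb{A}^2$ and $S$ the locus where the first coordinates of the two factors agree) the set you call $N_m(U)$ is \emph{infinite}, so the asymptotic $N_m(U)=c\,q^{m\dim U}+O(q^{m(\dim U-1/2)})$ is meaningless; one must first reduce to an irreducible subvariety of $S$ of dimension $\dim U$ whose two projections remain dominant, a nontrivial step carried out in \cite{Hr}. Finally, your heuristic about the second projection is off: its dominance does not ``ensure the fixed-point scheme is reduced'' (multiplicities are controlled by the local terms, i.e.\ again by Deligne's conjecture); rather, it rules out degenerate correspondences such as $S=U\times\{c\}$, where the solution set is a single point independent of $m$ and the leading coefficient would vanish. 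The bound $N_m(W)=O(q^{m\dim W'})$ for the components $W'$ of the restricted correspondence also needs an upper-bound Lang--Weil-type estimate rather than following formally, though that part is routine. In summary: the reduction of the corollary to a quantitative twisted point-count is sound and matches the post-Hrushovski literature, but every analytically substantive step --- the trace formula for the twisted correspondence on a non-proper variety, the positivity of the leading term, and the reduction to the equidimensional case --- is missing.
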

Using Hrushovski's result, we can reduce the positive characteristic case to the case of an algebra satisfying a polynomial identity, which we are able to handle via the Tits alternative.  A consequence of Theorem \ref{thm: main1} is the following result concerning group algebras.
\begin{corollary} Let $G$ be a non-abelian torsion-free solvable-by-finite group and let $k$ be a field. Then the the quotient division algebra of the group algebra $k[G]$ contains a free non-cyclic multiplicative subgroup.
\label{corollary:main}
\end{corollary}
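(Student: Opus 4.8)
The plan is to deduce Corollary \ref{corollary:main} from Theorem \ref{thm: main1} by locating, inside the quotient division ring $D$, a division subring that is either a noncommutative Ore extension of automorphism type or a noncommutative division ring finite-dimensional over its centre. First I would record the standing facts that make $D$ meaningful. Since $G$ is torsion-free solvable-by-finite it is a torsion-free elementary amenable group, so $k[G]$ is a domain (by the Kropholler--Linnell--Moody solution of the zero-divisor problem in this setting); and since $G$ is amenable the domain $k[G]$ satisfies the Ore condition, so $D=\Frac(k[G])$ is a division ring. The same amenability makes every nonzero element of a subgroup ring $k[H]$ with $H\le G$ regular in $k[G]$, and hence invertible in $D$, so that the division subring of $D$ generated by $k[H]$ is canonically $\Frac(k[H])$. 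Thus it suffices to produce a suitable subgroup $H\le G$ and argue inside $\Frac(k[H])$.

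The core is a group-theoretic dichotomy: a non-abelian torsion-free solvable-by-finite group $G$ contains either \emph{(a)} a non-abelian nilpotent subgroup, or \emph{(b)} a subgroup $H=\langle A,t\rangle$ in which $A$ is abelian, $tAt^{-1}=A$, and $t$ acts nontrivially on $A$. To establish this, fix a normal solvable subgroup $G_0$ of finite index. If $G_0$ is non-abelian, let $A_M=G_0^{(m)}$ be the last nontrivial term of its derived series; it is abelian and normal in $G_0$. If $A_M$ is central in $G_0$, then $G_0^{(m-1)}$ has derived subgroup $A_M\ne 1$ contained in its centre, hence is a non-abelian nilpotent subgroup and we are in case \emph{(a)}; otherwise some $t\in G_0$ satisfies $[t,A_M]\ne 1$, and shrinking to the $\langle t\rangle$-orbit $A=\langle t^iat^{-i}:i\in\Z\rangle$ of an element $a\in A_M$ moved by $t$ lands us in case \emph{(b)}. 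If instead $G_0$ is abelian, then $G$ is abelian-by-finite and non-abelian; were $G_0$ central, $G$ would be centre-by-finite, so $[G,G]$ would be finite by Schur's theorem and hence trivial (as $G$ is torsion-free), forcing $G$ abelian, a contradiction. So some $g\in G$ moves $G_0$, and taking $A$ to be the $\langle g\rangle$-orbit of a moved element again gives case \emph{(b)}.

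In case \emph{(a)}, the chain $G\subseteq k[G]^{\ast}\subseteq D^{\ast}$ shows that $D^{\ast}$ contains a non-abelian nilpotent subgroup, so it contains a free non-cyclic subgroup by \cite{Li2}. In case \emph{(b)} set $K=\Frac(k[A])$, a field (here $k[A]$ is a commutative domain because $A$ is torsion-free abelian), and let $\sigma\ne\id$ be the automorphism of $K$ induced by conjugation by $t$. Now split according to the order of $t$ modulo $A$. If $t$ has infinite order modulo $A$, then $H=A\rtimes\langle t\rangle$ with $H/A\cong\Z$, so $k[H]=k[A][t,t^{-1};\sigma]$ and $\Frac(k[H])\cong K(t;\sigma)$ is a noncommutative Ore extension of automorphism type; Theorem \ref{thm: main1} then yields a free non-cyclic subgroup of $K(t;\sigma)^{\ast}\subseteq D^{\ast}$. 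If instead $t^{k}\in A$ for some least $k\ge 2$, then $k[H]$ is free of rank $k$ over $k[A]$, so $\Frac(k[H])$ is a noncommutative division ring of one-sided dimension $k$ over $K$; moreover the fixed field $K^{\sigma}$ commutes with both $K$ and $t$ and so is central in $\Frac(k[H])$, whence $\Frac(k[H])$ is finite-dimensional over its centre, and the known finite-dimensional case of Lichtman's conjecture \cite{Li2, Gon} supplies the free subgroup. In all cases $D^{\ast}$ contains a free non-cyclic subgroup.

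The main obstacle I expect is the group-theoretic step \emph{(b)}: extracting, purely from non-abelianness and solvability, an abelian-by-cyclic subgroup on which conjugation acts nontrivially and in which one can recognise an Ore-extension structure after passing to fraction fields. The delicate point is that $t$ need not have infinite order modulo $A$, so the argument must branch into the genuinely infinite skew-Laurent case handled by Theorem \ref{thm: main1} and the finite cyclic case handled by the classical finite-dimensional theory; verifying that $K^{\sigma}$ is central in the latter case, and that fraction-field formation is compatible with the inclusion $k[H]\subseteq k[G]$, are the points that require the most care.
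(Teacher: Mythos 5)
Your proposal is correct, and its skeleton matches the paper's: both arguments reduce Corollary \ref{corollary:main} to finding inside $G$ either an abelian-by-finite (or nilpotent) configuration, disposed of by the finite-dimensional/Tits-alternative results \cite{Li2, Gon}, or a subgroup of the form $A\rtimes\mathbb{Z}$ with nontrivial action, where $\Frac(k[A\rtimes\mathbb{Z}])\cong K(t;\sigma)$ with $K=\Frac(k[A])$ and $\sigma\neq\id$, so that Theorem \ref{thm: main1} applies; both also use the embedding $\Frac(k[H])\hookrightarrow\Frac(k[G])$ for subgroups $H\le G$, justified via \cite{KLM} and the Ore condition. The group-theoretic decomposition, however, is genuinely different. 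The paper first passes to a non-abelian metabelian subgroup, sets $A=G'$, and picks $x$ not centralizing $A$, splitting on whether $x^m\in A$; you instead work with a finite-index normal solvable subgroup $G_0$, take the last nontrivial term $A_M$ of its derived series, and branch on whether $A_M$ is central in $G_0$, which produces your extra case (a) of a non-abelian nilpotent subgroup, handled by Lichtman's nilpotent-by-finite theorem \cite{Li2}. That branch is a real gain in care: in the paper's reduction, when $G$ is nilpotent of class two (e.g.\ a discrete Heisenberg group, which is torsion-free, metabelian and non-abelian) one has $G'\subseteq Z(G)$, so no $x$ failing to centralize $A=G'$ exists and the paper's instruction ``pick $x\in G$ that does not commute with every element of $A$'' cannot be executed as written; your case (a) covers exactly this configuration, which the paper's proof passes over silently. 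Your treatment of the finite-order branch ($t^{k}\in A$ with least $k\ge 2$) is likewise more detailed than the paper's one-line assertion: you verify that conjugation by $t^{k}\in A$ is trivial on $K$, hence $\sigma$ has finite order, that the fixed field $K^{\sigma}$ centralizes the generators $K$ and $t$ and is therefore central, and that $\Frac(k[H])$ is finite-dimensional over $K^{\sigma}$ --- precisely the justification behind the paper's claim that $\Frac(k[B])$ is finite-dimensional over its centre when $B$ is non-abelian abelian-by-finite. The only costs of your route are the extra appeals to Schur's theorem (to rule out $G_0$ central in the abelian-by-finite branch) and to amenability for the Ore condition, both standard; all citations you invoke are used for exactly the purposes the paper itself attributes to them.
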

The corollary follows from Theorem \ref{thm: main1} in a straightforward manner.  First, a non-abelian solvable group contains a non-abelian metabelian subgroup and so we can quickly reduce to the metabelian case.  It can be shown that non-abelian metabelian groups either contain a non-abelian abelian-by-finite subgroup or a semidirect product of an abelian group with an infinite cyclic group; the second case can be handled with Theorem \ref{thm: main1}, while the first case follows from work of Passman and Isaacs along with earlier work of Lichtman using the Tits alternative. 

The outline of the paper is as follows.  In \S\ref{sec:Pre} we give the basic facts involving regular extensions and introduce the notion of pro-unipotence, which will be a key part of the argument in providing a framework via which we can lift free subgroups.  In \S\ref{sec:pc} and \S\ref{sec:zc} we will prove the automorphism case of Theorem \ref{thm: main1} in the case when $K$ is finitely generated over a prime subfield of positive characteristic and zero characteristic respectively.  We need the result of Hrushovski mentioned above to handle the positive characteristic case; the characteristic zero case is handled via reduction to the positive characteristic case.  In \S\ref{sec:main} we prove Theorem \ref{thm: main1} and finally in \S\ref{sec:app} we prove Corollary \ref{corollary:main}.
\section{Preliminaries}
\label{sec:Pre}
In this section, we give some of the basic concepts we will need in proving Theorem \ref{thm: main1}.
We start with the notion of regular extensions.  Let $K$ be a field extension of a field $k$.  We recall that $K$ is a regular extension of $k$ if $K\otimes_k F$ is an integral domain for every field extension $F$ of $k$.  This in turn is equivalent to $k$ being algebraically closed in $K$ and $K$ having a separating transcendence base (i.e., $K$ has a subfield $L$ that is a purely transcendental extension of $k$ and such that $K$ is separable over $L$). In characteristic zero, having a separating transcendence base is automatic; in positive characteristic, this simply means that there is a purely transcendental extension $L$ of $k$ inside of $K$ such that $K$ is a separable algebraic extension of $L$.
The main significance of regularity is given by the following result.
\begin{proposition} Let $C$ be a finitely generated commutative $\mathbb{Z}$-algebra and let $R$ be a finitely generated commutative $C$-algebra with $C\subseteq R$ and suppose that $C$ and $R$ are integral domains.  If ${\rm Frac}(R)$ is a regular extension of ${\rm Frac}(C)$ then there is a nonzero $c\in C$ such that $R/PR$ is an integral domain for every prime $P$ in ${\rm Spec}(C)$ such that $c\not\in P$.
\label{prop:invert}
\end{proposition}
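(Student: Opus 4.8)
The plan is to read the statement geometrically. Writing $k=\Frac(C)$ and $K=\Frac(R)$, the inclusion $C\subseteq R$ of domains gives a dominant morphism $\Spec(R)\to\Spec(C)$ whose generic fibre is $\Spec(R\otimes_C k)$, where $R\otimes_C k=(C\setminus\{0\})^{-1}R$ is a finitely generated $k$-algebra that is a domain with $\Frac(R\otimes_C k)=K$. The hypothesis that $K$ is a regular extension of $k$ says exactly that $K\otimes_k F$ is a domain for every field extension $F/k$; taking $F=\kbar$ and localizing shows that the generic fibre is \emph{geometrically integral} over $k$. The goal is then to spread this property out to a dense open subset of the base, and to descend it from the fibres to the rings $R/PR$ themselves.

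First I would apply Grothendieck's generic flatness theorem: since $C$ is a noetherian domain and $R$ is a finitely generated $C$-algebra, there is a nonzero $c_1\in C$ such that $R[1/c_1]$ is a free $C[1/c_1]$-module. Replacing $C$ and $R$ by these localizations — which only restricts attention to primes $P$ with $c_1\notin P$, and keeps both rings finitely generated — we may assume $R$ is free over $C$. Then for every prime $P\in\Spec(C)$ the module $R/PR=R\otimes_C C/P$ is free, hence torsion-free, over the domain $C/P$, so with $\kappa(P):=\Frac(C/P)$ the localization map
\[
R/PR\;\hookrightarrow\;(R/PR)\otimes_{C/P}\kappa(P)=R\otimes_C\kappa(P)
\]
is injective. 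This is the crucial reduction: $R/PR$ is a domain as soon as the fibre $R\otimes_C\kappa(P)$ is a domain.

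It remains to show the fibre is a domain for all $P$ outside a proper closed subset. Here I would invoke the constructibility of the geometrically integral locus (Grothendieck, EGA IV, 9.7.7): for a finite-type morphism, the set of points of the base over which the fibre is geometrically integral is constructible. Since the generic fibre is geometrically integral, this constructible set contains the generic point $(0)$ of the integral scheme $\Spec(C)$; a constructible set containing the generic point of an irreducible noetherian scheme contains a dense open subset, and as $\Spec(C)$ is affine this open set contains a basic open $D(c_2)$ for some nonzero $c_2\in C$. For every $P$ with $c_2\notin P$ the fibre $R\otimes_C\kappa(P)$ is then geometrically integral, in particular a domain. Taking $c=c_1c_2$ and combining with the previous paragraph, $R/PR$ is a domain whenever $c\notin P$.

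The step I expect to be the main obstacle is not the spreading-out itself — that is a standard, if deep, theorem — but the passage from the generic fibre to all nearby fibres \emph{together with} the descent to $R/PR$. Geometric integrality of the generic fibre only guarantees a domain after base change to a residue field, whereas a priori $R/PR$ could fail to be a domain because of $C/P$-torsion (elements vanishing only after inverting the image of $C$); so the statement genuinely requires the torsion control supplied by generic flatness in addition to the constructibility result, and carefully tracking both inverted elements $c_1$ and $c_2$ is the heart of the argument.
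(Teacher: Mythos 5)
Your proposal is correct and takes essentially the same route as the paper: both pass to the generic fibre $(C\setminus\{0\})^{-1}R$, use regularity of $\Frac(R)$ over $\Frac(C)$ to see that it is geometrically integral, invoke the constructibility result of EGA IV, Theorem 9.7.7 to get a dense open $U\subseteq \Spec(C)$ of good primes, and finally shrink $U$ to a basic open set $D(c)$. The one point where you go beyond the paper's writeup is your generic flatness step. The paper asserts directly that for $P\in U$ the ring ``$R\otimes_C C/P=R/PR$'' is a domain, tacitly identifying $R/PR$ with the fibre; but the fibre in the sense of EGA 9.7.7 is $R\otimes_C\kappa(P)$ with $\kappa(P)=\Frac(C/P)$, and $R/PR$ is only a subring of it when $R/PR$ is $C/P$-torsion-free (for non-maximal $P$ this is a genuine issue). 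Your localization at $c_1$ making $R$ free over $C$ supplies exactly this injectivity, so your argument is in fact more careful than the paper's at the one step where its proof glosses a gap; the extra inverted element is harmless since the conclusion only concerns primes avoiding a single nonzero element of $C$.
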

\begin{proof} Let $X={\rm Spec}(R)$ and let $Y={\rm Spec}(C)$. Then the inclusion $C\to R$ and the fact that $R$ is a finitely presented $C$-algebra gives a morphism of finite presentation $f: X\to Y$.  Let $S=C\setminus \{0\}$.  Then 
$S^{-1}R$ is a ${\rm Frac}(C)$-algebra with field of fractions ${\rm Frac}(R)$ and since ${\rm Frac}(R)$ is a regular extension of ${\rm Frac}(C)$, we have
$$S^{-1}R\otimes_{{\rm Frac}(C)}\overline{{\rm Frac}(C)}$$ is an integral domain.  This means that the generic fiber of $f$ is geometrically irreducible, and so by \cite[Theorem 9.7.7]{GD} there is a non-empty open subset $U$ of $Y$ such that for points $P\in U$ we have $R\otimes_C C/P = R/PR$ is an integral domain.  Then since $U$ is open, its complement is a closed subset and thus contained in a closed set of the form $\{Q\colon c\in Q\}$ for some $c$. It follows that $U$ contains the open set of primes $P$ such that $c\not\in P$ and so we obtain the result.
\end{proof}
\begin{definition}
Let $R$ be a commutative noetherian integral domain with an automorphism $\sigma$.  We say that an element $u$ of the units group of ${\rm Frac}(R[x;\sigma])$ is \emph{pro-unipotent} with respect to $R$ if it is of the form $(1+a)(1+b)^{-1}$ with $a,b\in xR[x;\sigma]$.  In the case that $R$ is a field we will call elements that are pro-unipotent with respect to $R$ simply pro-unipotent.
\end{definition}
We recall that we can complete $R[x;\sigma]$ at the prime ideal $(x)$ to obtain a skew power series ring $R[[x;\sigma]]$. The justification for the term pro-unipotent comes from the fact that if we let $I_n=x^n R[[x;\sigma]]$, then regarding a pro-unipotent element $u=(1+a)(1+b)^{-1}$ (with respect to $R$) as an element of $R[[x;\sigma]]$ (by expanding $(1+b)^{-1}$ as a skew power series and left multiplying by $(1+a)$, we see that it has the property that $u-1$ is nilpotent mod $I_n$ and so the image of $u$ is a unipotent element of $R[[x;\sigma]]/I_n$ for every $n\ge 1$. We remark that in the case when $R$ is a field it is straightforward to show that the pro-unipotent elements form a normal subgroup of ${\rm Frac}(R[x;\sigma])^*$. The significance for pro-unipotence is that pro-unipotent elements have well behaved lifting properties that allow us to lift free subgroups from homomorphic images. We make this precise with the following lemma.

\begin{lemma} Let $R$ and $S$ be commutative noetherian integral domains, let $\sigma$ and $\tau$ be automorphisms of $R$ and $S$ respectively, and suppose that there is a surjective homomorphism $\phi: R[x;\sigma]\to S[x;\tau]$ of graded algebras; that is, $\phi(Rx^n)=Sx^n$ for $n\in \mathbb{N}$.  Then if there exist elements $u,v$ in ${\rm Frac}(S[x;\tau])$ that are pro-unipotent with respect to $S$ that generate a free non-cyclic subgroup of the units group of ${\rm Frac}(S[x;\tau])$ then there exist $u',v'$ in ${\rm Frac}(R[x;\sigma])$, pro-unipotent with respect to $R$, that generate a free non-cyclic subgroup of ${\rm Frac}(R[x; \sigma])^*$.  
\label{lem:reduce}
\end{lemma}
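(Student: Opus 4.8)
The plan is to exploit the elementary principle that a group homomorphism cannot kill a nontrivial reduced word whose image survives in the target: so if I can produce candidate lifts $u',v'$ together with a group homomorphism carrying them to the free generators $u,v$, then $u',v'$ must themselves be free. The device that makes such a homomorphism available — even though $\phi$ is only a map of polynomial rings while $u,v,u',v'$ live in fraction fields — is the skew power series completion at $(x)$. First I would record that $\phi$, being graded with $\phi(Rx^n)=Sx^n$, carries $x^nR[x;\sigma]$ into $x^nS[x;\tau]$; hence $\phi$ is continuous for the $(x)$-adic topologies and extends to a ring homomorphism $\hat\phi\colon R[[x;\sigma]]\to S[[x;\tau]]$ of completions. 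This $\hat\phi$ sends the group of one-units $1+xR[[x;\sigma]]$ into $1+xS[[x;\tau]]$ and restricts there to a group homomorphism.

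Next I would produce the lifts explicitly. Because $\phi$ is graded and surjective, its restriction to the positive-degree part $xR[x;\sigma]=\bigoplus_{n\ge 1}Rx^n$ is onto $xS[x;\tau]=\bigoplus_{n\ge1}Sx^n$. Writing the given generators as $u=(1+a)(1+b)^{-1}$ and $v=(1+c)(1+d)^{-1}$ with $a,b,c,d\in xS[x;\tau]$, I would choose polynomial lifts $a',b',c',d'\in xR[x;\sigma]$ with $\phi(a')=a$ and so on, and set $u'=(1+a')(1+b')^{-1}$, $v'=(1+c')(1+d')^{-1}$. By construction $u',v'$ are pro-unipotent with respect to $R$. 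Expanding $(1+b')^{-1}$ and $(1+d')^{-1}$ as skew power series realizes $u',v'$ as one-units of $R[[x;\sigma]]$, and since $\hat\phi$ is multiplicative and agrees with $\phi$ on polynomials, I get $\hat\phi(u')=u$ and $\hat\phi(v')=v$ inside $1+xS[[x;\tau]]$.

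It then remains to detect freeness. Here I would use that $R[x;\sigma]$ and $R[[x;\sigma]]$ are noetherian domains, hence Ore domains, so they have division rings of fractions, and the inclusion $R[x;\sigma]\hookrightarrow R[[x;\sigma]]$ (sending regular elements to regular elements) induces an embedding $j\colon {\rm Frac}(R[x;\sigma])\hookrightarrow {\rm Frac}(R[[x;\sigma]])$; similarly one gets $j_S$ for $S$. Under $j$ the pro-unipotent elements map to their power series expansions, so $j$ restricts to a group homomorphism $e_R$ from the subgroup generated by $u',v'$ into the one-units of $R[[x;\sigma]]$, and likewise $e_S$ for $u,v$; note $e_S$ is injective, and by the previous paragraph $\hat\phi(e_R(u'))=e_S(u)$, $\hat\phi(e_R(v'))=e_S(v)$. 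Now for any nontrivial reduced word $w$, applying the group homomorphism $\hat\phi\circ e_R$ gives $\hat\phi\big(e_R(w(u',v'))\big)=w\big(e_S(u),e_S(v)\big)=e_S\big(w(u,v)\big)$, which is nonzero-nontrivial because $u,v$ are free in ${\rm Frac}(S[x;\tau])^*$ and $e_S$ is injective. Hence $w(u',v')\neq 1$ in ${\rm Frac}(R[x;\sigma])^*$, so $u',v'$ generate a free non-cyclic subgroup.

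The main obstacle — and precisely the reason pro-unipotence is built into the hypothesis — is that $\phi$ does not extend to the fraction fields, so one cannot lift arbitrary units. The argument succeeds only because pro-unipotent elements are \emph{finite} expressions $(1+a)(1+b)^{-1}$ whose defining data lie in the positive-degree part, where $\phi$ is visibly surjective, and because passing to the completion supplies an honest ring (not merely a filtration) on which $\hat\phi$ is defined and through which the freeness of $w(u,v)$ can be transported back. I would therefore devote the most care to the two compatibility facts: that ${\rm Frac}$ of the polynomial ring embeds in ${\rm Frac}$ of its completion, so that a relation among the $u'$ survives into the power series world, and that $\hat\phi$ genuinely restricts to a group homomorphism of one-units intertwining the expansions.
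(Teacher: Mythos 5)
Your proposal is correct and takes essentially the same approach as the paper: extend the graded map $\phi$ to the skew power series completions, lift the data $a,b,c,d\in xS[x;\tau]$ through the surjection to define pro-unipotent $u',v'$, and transport any putative relation in ${\rm Frac}(R[x;\sigma])^*$ through the power series world via $\widehat{\phi}$ to contradict freeness of $u,v$. Your write-up in fact makes explicit two compatibility points the paper leaves implicit (the $(x)$-adic continuity giving $\widehat{\phi}$, and the embedding ${\rm Frac}(R[x;\sigma])\hookrightarrow {\rm Frac}(R[[x;\sigma]])$ used to move a relation into the completion), but the underlying argument is the same.
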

\begin{proof} Since $\phi$ is a degree-preserving homomorphism, $\phi$ extends to a surjective homomorphism 
$\widehat{\phi} : R[[x;\sigma]]\to S[[t;\tau]]$.
By assumption, there exist $a,b,c,d\in xS[x;\tau]$ such that $u:=(1+a)(1+b)^{-1}$ and $v:=(1+c)(1+d)^{-1}$ are units in ${\rm Frac}(S[x;\tau])$ and such that they generate a free non-cyclic subgroup of the units group of $S[[x;\tau]]$.  Now $\phi$ is surjective and so we may pick $a',b',c',d'\in xR[x;\sigma]$ such that $\phi(a')=a,\phi(b')=b,\phi(c')=c,\phi(d')$.  Then
$u':=(1+a')(1+b')^{-1}, v':=(1+c')(1+d')^{-1}\in {\rm Frac}(R[x;\sigma])^*$ and regarding $u,v,u',v'$ as skew power series we have $\widehat{\phi}(u')=u$ and $\widehat{\phi}(v')=v$. Then since $u$ and $v$ generate a free group, we see that $u'$ and $v'$ must too, since a non-trivial relation involving $u'$ and $v'$ in ${\rm Frac}(R[x;\sigma])^*$ would give a relation in ${\rm Frac}(R[[x;\sigma]])^*$ and this would then give a corresponding non-trivial relation involving $u$ and $v$ via the map $\widehat{\phi}$.  \end{proof}

Our goal is to use this lemma as a reduction tool in proving the following strengthening of Theorem \ref{thm: main1}.
\begin{theorem}
\label{thm: main2}
Let $K$ be a field and let $\sigma$ be an automorphism of $K$ that is not the identity.  Then there exist pro-unipotent elements of ${\rm Frac}(K[x;\sigma])$ that generate a non-cyclic multiplicative free group.
\end{theorem}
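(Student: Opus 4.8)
The plan is to prove Theorem~\ref{thm: main2} by a reduction strategy that peels off manageable cases until one reaches a base case where Hrushovski's theorem (Theorem~\ref{thm:Hrush}) can be applied. The first observation is that Lemma~\ref{lem:reduce} is exactly the tool that lets us reduce the base field: since $\sigma$ is not the identity, there is some $\alpha \in K$ with $\sigma(\alpha) \ne \alpha$, and the subfield $k_0$ generated over the prime field by $\alpha$, $\sigma(\alpha)$, and finitely many related elements is $\sigma$-stable (or can be enlarged to be so). I would arrange for a $\sigma$-stable finitely generated subfield $K_0 \subseteq K$ on which $\sigma$ still acts nontrivially, and then build a degree-preserving surjection (or rather use that $K_0[x;\sigma]$ embeds compatibly) so that producing pro-unipotent free generators over $K_0$ yields them over $K$. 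Thus it suffices to treat $K$ finitely generated over its prime field, as promised in the introduction.

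Next I would split on characteristic. In positive characteristic, $K$ is a finitely generated extension of $\mathbb{F}_p$, and after another application of the regularity machinery (Proposition~\ref{prop:invert}) I would arrange a model: realize $K$ as the function field of a variety $U$ over a finite field $\mathbb{F}_q$, with $\sigma$ induced by a suitable correspondence, ideally the $q$-Frobenius $\phi_q$ or a related map. The goal is to encode the statement ``$u,v$ generate a free group'' as a statement that certain words $w(u,v)$ are nontrivial, which translates into avoiding a countable collection of proper subvarieties $W$. Hrushovski's theorem provides, for each proper $W$ and large enough $m$, a point $x \in U(\bar{\mathbb{F}}_q)$ with $(x, \phi_q^m(x)) \in S$ and $x \notin W$; this is precisely what lets one evade every nontrivial relation simultaneously. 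In this base case, though, I expect that one actually reduces to an algebra satisfying a polynomial identity (as the introduction indicates), where the Tits alternative finishes the argument — so Hrushovski's input is what guarantees the existence of a good specialization placing us in the PI setting.

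The characteristic zero case I would handle by reduction to positive characteristic, as flagged in the outline. Here one spreads out the finitely generated field $K$ and the automorphism $\sigma$ over a finitely generated $\mathbb{Z}$-subalgebra $C$, then reduces modulo a prime $P$ of $C$ (avoiding a bad locus cut out by some nonzero $c \in C$ via Proposition~\ref{prop:invert}) to land in characteristic $p$. The pro-unipotence condition is what makes this reduction safe: pro-unipotent elements, being of the form $(1+a)(1+b)^{-1}$ with $a,b \in xR[x;\sigma]$, have denominators that stay units after reduction, and a nontrivial relation upstairs would descend to a nontrivial relation in the characteristic-$p$ fiber, contradicting the positive-characteristic case. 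This is exactly the lifting philosophy recorded after the definition of pro-unipotence.

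The main obstacle I anticipate is the construction of the explicit pro-unipotent generators and the verification that they generate a free group in the positive-characteristic base case. It is not enough to know abstractly that free subgroups exist; Theorem~\ref{thm: main2} demands that the generators be \emph{pro-unipotent}, so the candidate elements $u = (1+a)(1+b)^{-1}$ must be chosen with $a,b$ vanishing at $x=0$, and one must check freeness while respecting this normalization. Translating the freeness of a specific pair of pro-unipotent elements into the geometric language of irreducible correspondences $S \subseteq U^2$ with dominant projections — so that Hrushovski's theorem genuinely applies — is the delicate technical heart of the argument; everything else is comparatively formal reduction. I would therefore invest the most care in setting up the dynamical/geometric dictionary correctly so that ``no nontrivial word vanishes'' becomes ``a point exists outside a prescribed proper subvariety.''
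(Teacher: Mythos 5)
There is a genuine gap, and it is in the very first step. You propose to ``arrange for a $\sigma$-stable finitely generated subfield $K_0\subseteq K$ on which $\sigma$ still acts nontrivially,'' but this is impossible in general: any $\sigma$-stable subfield containing an element $c$ with $\sigma(c)\neq c$ must contain the entire orbit field $k_0(\{\sigma^n(c)\colon n\in\mathbb{Z}\})$, and this field need not be finitely generated. For instance, take $K=k(t_n\colon n\in\mathbb{Z})$ with $\sigma$ the shift $t_n\mapsto t_{n+1}$: since every subextension of a finitely generated field extension is itself finitely generated, no finitely generated $\sigma$-stable subfield of $K$ carries a nontrivial restriction of $\sigma$. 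So your reduction to the finitely generated case has a missing branch that cannot be patched by ``enlarging.'' The paper handles exactly this dichotomy: when the orbit field $K'$ is \emph{not} finitely generated over the prime field, it abandons the specialization machinery entirely and instead invokes \cite[Proposition 3.2]{BR12} to get that $cx$ and $cx^2$ generate a free algebra, hence $a=c\sigma(c)x^3$ and $b=c\sigma^2(c)x^3$ do, and then S\'anchez's theorem \cite{Sa} to conclude that $1+a$ and $1+b$ generate a free group --- and these are visibly pro-unipotent. Your proposal contains no substitute for this mechanism. Note also that shrinking the field does not need Lemma~\ref{lem:reduce} at all (a subfield gives a plain embedding $K'(x;\sigma)\hookrightarrow K(x;\sigma)$, under which free stays free); Lemma~\ref{lem:reduce} runs in the opposite direction, lifting free pro-unipotent pairs from a \emph{homomorphic image} such as a residue field, which is where it is actually used in Propositions~\ref{prop:pc} and~\ref{prop:zc}.

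Two secondary points. First, your picture of the Hrushovski step --- encoding freeness as avoiding ``a countable collection of proper subvarieties,'' one per word $w(u,v)$ --- does not match Theorem~\ref{thm:Hrush}, which excludes only a single proper subvariety $W$, and no word-by-word avoidance occurs in the paper. The actual use is coarser: one finds a single point $x$ with $\phi(x)=F^j(x)$ outside one bad locus $V$, deduces that $x$ has a finite $\phi$-orbit $S$, forms the $\sigma$-invariant semilocal ring $\mathcal{O}_{X,S}$ whose relevant residue ring is a finite field, applies the PI case (Proposition~\ref{prop:PI}, via the Tits alternative) there, and lifts with Lemma~\ref{lem:reduce}; your closing remark about ``a good specialization placing us in the PI setting'' is the correct picture, but the word-avoidance framing would not work as written. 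Second, your characteristic-zero logic is stated backwards: a nontrivial relation upstairs descending to characteristic $p$ contradicts nothing, since Proposition~\ref{prop:pc} only asserts that \emph{some} free pro-unipotent pair exists in the fiber, not that all pro-unipotent pairs are free. The correct order is to first produce the free pro-unipotent pair in the residue ring and then lift it via Lemma~\ref{lem:reduce}, so that a relation among the lifts maps onto a relation among the known free generators. Relatedly, the construction of a $\sigma$-invariant local subring with good reduction and nontrivial induced automorphism (Lemma~\ref{lem:S}, which requires first replacing $\sigma$ by a power to make $K$ regular over the fixed field) is a substantial piece of work, not the routine spreading-out you describe, and the finite-order case of $\sigma$ must be split off separately via Proposition~\ref{prop:PI} in both characteristics.
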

In the next section, we shall begin the proof of this result using Lemma \ref{lem:reduce} to provide a series of reductions.

 \section{Theorem \ref{thm: main2} in the case that $K$ is finitely generated over a finite field}
\label{sec:pc}
In this section we prove the following result, which deals with Theorem \ref{thm: main2} in the case when $K$ is finitely generated over a finite field.  This is really the key step in our analysis and we divide the proof into the cases when the automorphism $\sigma$ has finite order and when it has infinite order; the infinite order case reduces to the finite order case.

The finite-order case is relatively straightforward and we prove a more general result.  %We recall some basic facts about rings with automorphisms.  Given a noetherian ring $R$ with an automorphism $\sigma$, we say that a proper $\sigma$-invariant ideal $I$ of $R$ is $\sigma$-\emph{prime} if whenever $J$ and $L$ are $\sigma$-invariant ideals containing $I$ such that $JL\subseteq I$ we must have $J=I$ or $L=I$.
\begin{proposition} \label{prop:PI} Let $K$ be a field and let $\sigma$ be a non-identity automorphism of $K$ of finite order. Then there are pro-unipotent $u,v$ in ${\rm Frac}(K[x;\sigma])$ that generate a free non-cyclic multiplicative subgroup of the units group. \end{proposition}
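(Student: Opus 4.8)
The plan is to use the fact that a finite-order automorphism forces $D := {\rm Frac}(K[x;\sigma])$ to be finite-dimensional over its centre, so that the existence of \emph{some} free non-cyclic subgroup of $D^{*}$ is already known, and then to use the $x$-adic valuation to push that free subgroup into the pro-unipotent part. First I would record the structural facts. If $\sigma$ has order $n$, then $x^{n}$ is central in $K[x;\sigma]$ (since $x^{n}\alpha = \sigma^{n}(\alpha)x^{n} = \alpha x^{n}$), and with $F = K^{\sigma}$ the centre of $D$ is $F(x^{n})$, over which $D$ is a cyclic division algebra of degree $n$. In particular $D$ is finite-dimensional over its centre, and it is non-commutative because $\sigma\neq\id$. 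By the finite-dimensional case of Lichtman's conjecture recalled in the introduction (deduced from the Tits alternative, \cite{Li2, Gon}), $D^{*}$ therefore contains a free non-cyclic subgroup $\Gamma$.

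The heart of the argument is to show that the pro-unipotent normal subgroup $N \trianglelefteq D^{*}$ is so large that $D^{*}/N$ is solvable. I would analyse $N$ through the $x$-adic valuation $w$ on $D$ (the extension to $D$ of the $x^{n}$-adic valuation on the centre), normalised so that $w(x)=1$ and $w|_{K^{*}}=0$; its value group is $\mathbb{Z}$ and its residue field is the commutative field $K$. Every pro-unipotent element lies in the group of $1$-units (value $0$, residue $1$), so $w$ vanishes on $N$ and $K^{*}\cap N = \{1\}$. The key computation is that $D^{*}$ is generated modulo $N$ by $K^{*}$ and $x$: writing a nonzero $p\in K[x;\sigma]$ as $p = c\,u\,x^{r}$, where $c\in K^{*}$, $r = \ord_{x}(p)$, and $u\in 1+xK[x;\sigma]\subseteq N$, one sees that the class of any $q^{-1}p\in D^{*}$ lies in $\langle \bar{K}^{*},\bar{x}\rangle$. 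Now $\bar{K}^{*}\cong K^{*}$ is abelian and normal in $D^{*}/N$ (conjugation by $\bar{x}$ induces $\sigma$), and $w$ induces an isomorphism of $(D^{*}/N)/\bar{K}^{*}$ onto $\mathbb{Z}$. Hence $D^{*}/N$ is metabelian, and in particular solvable.

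The group theory then finishes the proof. Because $\Gamma$ is free non-cyclic it is not solvable, so $\Gamma\cap N = \ker(\Gamma\to D^{*}/N)$ is a nontrivial normal subgroup of $\Gamma$; a nontrivial normal subgroup of a non-abelian free group is itself free and non-cyclic (of rank at least $2$). Any two basis elements of $\Gamma\cap N$ then generate a free non-cyclic subgroup whose elements are all pro-unipotent, which is exactly what is required. I expect the main obstacle to be the middle step — verifying that $D^{*}/N$ is solvable — which rests on the factorisation of skew polynomials together with the fact that the residue field of $w$ is commutative; by contrast, the existence of the ambient free group $\Gamma$ is a direct appeal to the already-known finite-dimensional case, and note that this entire argument requires no finiteness hypothesis on $K$, which is why the statement holds for an arbitrary field.
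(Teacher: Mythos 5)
Your proof is correct, but it takes a genuinely different route from the paper's. The paper's proof is a two-line appeal to the stronger, already-known theorem of Lichtman \cite{Li2} and Gon\c calves \cite{Gon} that every non-central \emph{normal} subgroup of $D^*$, for $D$ finite-dimensional over its centre, contains a free non-cyclic subgroup: one simply applies that theorem to the normal subgroup $N$ of pro-unipotent elements, after observing that $N$ is non-abelian (e.g.\ $1+x$ and $1+cx$ commute only if $\sigma(c)=c$, so choosing $c$ with $\sigma(c)\neq c$ gives two non-commuting elements of $N$). You instead use only the weaker statement that $D^*$ itself contains a free non-cyclic subgroup $\Gamma$, and you substitute for the normal-subgroup theorem an explicit structural computation: via the $x$-adic valuation and the left factorization $p=cux^r$ with $u\in 1+xK[x;\sigma]$, you show $D^*/N$ is an extension of $\mathbb{Z}$ by $K^*$, hence metabelian, so $\Gamma\cap N$ is a nontrivial normal subgroup of $\Gamma$ and therefore free non-cyclic. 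All your steps check out: the factorization identity, $K^*\cap N=\{1\}$, the normality of $\bar K^*$ under conjugation by $\bar x$, and the standard facts that nontrivial normal subgroups of non-abelian free groups are free and non-cyclic; like the paper, you also rely on the remark from \S\ref{sec:Pre} that the pro-unipotent elements form a normal subgroup of $D^*$. What each approach buys: the paper's is shorter given the literature, while yours is more self-contained (needing only the Tits-alternative corollary that $D^*$ contains a free subgroup) and actually proves something slightly stronger and reusable, namely that $D^*/N$ is metabelian, so that \emph{every} free non-cyclic subgroup of $D^*$ meets $N$ in a free non-cyclic subgroup of pro-unipotent elements.
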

\begin{proof}
Since $\sigma$ has finite order, $D={\rm Frac}(K[x;\sigma])$ is finite-dimensional over its centre.  In particular, $D^*$ is a matrix group.  Notice that the set of pro-unipotent elements forms a normal subgroup of $D^*$.  Then since $N$ is non-abelian, we get the result in this case \cite{Li2, Gon}.
%Let $\mathcal{S}$ be a finite set of generators for $K$ as an extension of $k$.  We let $R$ denote the finitely generated $k$-algebra generated by $$\bigcup_{i=0}^{m-1}\sigma^i(\mathcal{S}).$$ Then $R$ is $\sigma$-invariant.  For each maximal ideal $P$ of $R$ we let $I(P)=\cap_{i=1}^m \sigma^i(P)$, which is a $\sigma$-invariant $\sigma$-prime ideal of $R$.  By the Nullstellensatz, $R/P$ is a finite ring and so $R/I(P)$ is also a finite ring for each maximal ideal $P$ of $R$.  Since $R$ is a Jacobson ring we see that $$\bigcap_P I(P)=(0)$$ as $P$ ranges over the maximal ideals of $R$.  Then since $\sigma$ is not the identity, there is some $I(P)$ such that $\sigma$ does not induce the identity automorphism of $R/I(P)$.  
 %Then we have a surjective homomorphism from $R[x;\sigma]\to (R/I(P))[x;\sigma]$.  By Lemma \ref{lem:fin} we have that the quotient division ring of $ (R/I(P))[x;\sigma]$ contains two pro-unipotent elements that generate a free multiplicative subgroup of the units group.  By Lemma \ref{lem:reduce} we have that ${\rm Frac}(R[x;\sigma])$ contains two pro-unipotent elements that generate a free subgroup of the units group.  The result follows in this case and we obtain the claimed result.
\end{proof}

 \begin{proposition} Let $k$ be a finite field and let $K$ be a finitely generated extension of $k$ and let $\sigma$ be a non-identity $k$-algebra automorphism of $K$.  Then there exist pro-unipotent elements $u$ and $v$ of $K(x;\sigma)$ such that $u$ and $v$ generate a free non-cyclic subgroup of the units group of $K(x;\sigma)$.
 \label{prop:pc}
 \end{proposition}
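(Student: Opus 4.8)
The plan is to reduce to the finite-order situation already settled in Proposition \ref{prop:PI} by specializing $\sigma$ to a power of Frobenius at a sufficiently generic closed point, and then to transport the resulting free subgroup back up using Lemma \ref{lem:reduce}. If $\sigma$ has finite order we are done immediately by Proposition \ref{prop:PI}, so I may assume $\sigma$ has infinite order (although the argument below does not really use this). Since finite fields are perfect, after replacing $k$ by the algebraic closure of $k$ in $K$ I may assume $k=\mathbb{F}_q$ is algebraically closed in $K$, and then choose a finitely generated $k$-subalgebra $A\subseteq K$ that is a domain with ${\rm Frac}(A)=K$ and with $U:=\Spec(A)$ geometrically irreducible. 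As $\sigma$ is a $k$-algebra automorphism, it induces a dominant birational self-map $\phi\colon U\dashrightarrow U$ with $\sigma=\phi^{*}$, and I let $S\subseteq U\times U$ be the closure of the graph of $\phi$. Then $S$ is geometrically irreducible and both projections $S\to U$ are dominant, so $U$ and $S$ satisfy the hypotheses of Hrushovski's theorem (Theorem \ref{thm:Hrush}).

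Next I apply Theorem \ref{thm:Hrush} with a proper closed subset $W\subseteq U$ chosen to absorb every bad locus at once: the indeterminacy loci of $\phi$ and of $\phi^{-1}$, the locus over which the birational first projection $\pi_1\colon S\to U$ fails to be injective, and the zero locus of $\sigma(a_0)-a_0$, where $a_0\in A$ is a fixed element with $\sigma(a_0)\neq a_0$ (such an $a_0$ exists since $\sigma\neq\mathrm{id}$). Each piece is proper and closed, so $W$ is proper and closed. Hrushovski's theorem then produces, for all large $m$, a point $x_0\in U(\overline{\mathbb{F}}_q)$ with $x_0\notin W$ and $(x_0,\phi_q^m(x_0))\in S$. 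Because $x_0$ avoids the indeterminacy of $\phi$ and the non-injectivity locus of $\pi_1$, the unique point of $S$ above $x_0$ is $(x_0,\phi(x_0))$, whence $\phi(x_0)=\phi_q^m(x_0)$. Writing $\rho\colon A\to k(x_0)=\mathbb{F}_{q^{N}}$ for evaluation at $x_0$, and using that every $a\in A$ has coefficients in $\mathbb{F}_q$ (so that $a(\phi_q^m(x_0))=a(x_0)^{q^m}$), I obtain for all $a\in A$ the identity $\rho(\sigma(a))=a(\phi(x_0))=a(\phi_q^m(x_0))=\rho(a)^{q^m}$; that is, $\rho$ intertwines $\sigma$ with the Frobenius $\tau:=\mathrm{Frob}_{q^m}$ of $\mathbb{F}_{q^N}$. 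Moreover, since $x_0$ lies outside the zero locus of $\sigma(a_0)-a_0$, we get $\rho(a_0)^{q^m}=\rho(\sigma(a_0))\neq\rho(a_0)$, so $\tau\neq\mathrm{id}$.

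Because $\phi_q^m(x_0)$ is a Galois conjugate of $x_0$ over $\mathbb{F}_q$, the relation $\phi(x_0)=\phi_q^m(x_0)$ shows that $\phi$ fixes the closed point $z$ of $U$ underlying $x_0$; and since $x_0$ also avoids the indeterminacy of both $\phi$ and $\phi^{-1}$, the map $\phi$ is a local isomorphism at $z$. Consequently $\sigma=\phi^{*}$ restricts to an automorphism of the noetherian local domain $R:=\mathcal{O}_{U,z}$, which has ${\rm Frac}(R)=K$ and residue field $\mathbb{F}_{q^N}$, and the induced automorphism of the residue field is exactly $\tau$ by the intertwining relation. Reduction modulo the maximal ideal of $R$ then furnishes a graded surjection $R[x;\sigma]\twoheadrightarrow\mathbb{F}_{q^N}[x;\tau]$ of precisely the shape required by Lemma \ref{lem:reduce}, with $\tau$ a nontrivial automorphism of finite order. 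By Proposition \ref{prop:PI} there are pro-unipotent $\bar u,\bar v\in{\rm Frac}(\mathbb{F}_{q^N}[x;\tau])$ generating a free non-cyclic subgroup, and Lemma \ref{lem:reduce} lifts them to elements $u,v\in{\rm Frac}(R[x;\sigma])^{*}=K(x;\sigma)^{*}$ that are pro-unipotent with respect to $R$, hence pro-unipotent, and that generate a free non-cyclic subgroup, as required.

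The hard part is arranging all the genericity requirements on $x_0$ simultaneously — in particular, forcing $\tau\neq\mathrm{id}$ while at the same time keeping $\phi$ a local isomorphism at $z$ (so that $\sigma$ descends to an honest automorphism of the local model $R$ and Lemma \ref{lem:reduce} can be invoked). This is exactly where Hrushovski's theorem is indispensable: its freedom to prescribe an arbitrary proper closed $W$ to avoid lets me collapse all of these constraints into a single exclusion. A secondary point that needs care is securing the geometric irreducibility of $U$, and hence of the graph $S$, so that the theorem applies; this is where perfectness of the finite field $k$ and the choice of $k$ as the field of constants of $K$ are used.
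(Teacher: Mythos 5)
Your overall strategy is the paper's: apply Hrushovski's theorem (Theorem \ref{thm:Hrush}) to the graph of the birational self-map induced by $\sigma$, specialize at a point where $\phi$ agrees with a Frobenius power to get a finite residue field with a non-identity induced automorphism, invoke Proposition \ref{prop:PI} there, and lift via Lemma \ref{lem:reduce}. But there is one genuine gap, right at the first step. You say ``after replacing $k$ by the algebraic closure of $k$ in $K$ I may assume $k=\mathbb{F}_q$ is algebraically closed in $K$'' and then continue to treat $\sigma$ as a $k$-algebra automorphism for this enlarged $k$. The hypothesis only makes $\sigma$ linear over the \emph{original} $k$; $\sigma$ preserves the constant field $k_1$ (the algebraic closure of $k$ in $K$) setwise, but may act on it by a nontrivial power of Frobenius --- e.g.\ $K=\mathbb{F}_4(t)$, original $k=\mathbb{F}_2$, $\sigma|_{\mathbb{F}_4}=\varphi_2$, $\sigma(t)=t+1$ (infinite order). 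In that case $\phi$ is only Frobenius-semilinear over $k_1$: its graph does not lie in $U\times_{k_1}U$ (it lands in a non-diagonal component of $\Spec(k_1\otimes_{\mathbb{F}_p}k_1)$), so your correspondence $S$ is not a subvariety of $U^2$ in the sense required by Theorem \ref{thm:Hrush}, and your key identity $\rho(\sigma(a))=\rho(a)^{q^m}$, which needs $A$ and $\phi$ compatibly defined over the new $\mathbb{F}_q$, breaks. Conversely, if you keep the original $k$ to retain linearity, $U$ and $S$ can fail the geometric irreducibility you correctly identified as essential. The repair is exactly the paper's opening move, which you omitted: since $k_1$ is finite, $\sigma^{m_0}$ fixes $k_1$ pointwise for some $m_0\geq 1$; replace $(x,\sigma)$ by $(x^{m_0},\sigma^{m_0})$, using that $K(x^{m_0};\sigma^{m_0})$ embeds in $K(x;\sigma)$ and that pro-unipotent elements in the subring remain pro-unipotent. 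Note this is precisely where the infinite-order reduction is used, to guarantee $\sigma^{m_0}\neq\mathrm{id}$ --- so your parenthetical claim that the argument ``does not really use'' the infinite-order assumption is wrong.

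With that fix inserted, your argument is sound, and its endgame is in fact cleaner than the paper's. The paper works with a normal projective model $X$ over $\bar{k}$, so a single geometric point is not $\phi$-invariant: it must iterate $\phi^m(x)=F^{jm}(x)$ to produce a finite $\phi$-orbit, pass to the semilocal ring $\mathcal{O}_{X,S}$ of that orbit, intersect with the subfield $K'$, and prove primality of the resulting ideal by a separate Frobenius argument. You instead stay on the affine $\mathbb{F}_q$-model and observe that $\phi(x_0)=\phi_q^m(x_0)$ already forces $\phi$ to fix the underlying \emph{closed} point $z$, so the single local ring $\mathcal{O}_{U,z}$ does all the work, the orbit bookkeeping collapses into the residue field $\kappa(z)$, and you even identify the induced residue automorphism explicitly as $\mathrm{Frob}_{q^m}$ (your exclusion of the zero locus of $\sigma(a_0)-a_0$ subsumes the paper's separate avoidance of the fixed locus $Z$, and your ``non-injectivity locus of $\pi_1$'' is automatically contained in the indeterminacy locus, so both are harmless redundancies). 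Likewise, your observation that perfectness of finite fields makes regularity of $K/k_1$ automatic shortcuts the paper's detour through the separable closure $K'$ and the $p^s$-power trick.
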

 \begin{proof}
 The case when $\sigma$ has finite order follows from Proposition \ref{prop:PI}. Thus we may assume that $\sigma$ has infinite order.  By enlarging $k$ if necessary and replacing $\sigma$ by $\sigma^m$ for some $m\ge 1$, we may work instead with $K(x^m;\sigma^m)$ and assume that $k$ is algebraically closed in $K$ and that $\sigma$ restricts to the identity on $k$.  Pick a transcendence base $t_1,\ldots ,t_d$ for $K/k$ and let $K'$ denote the separable closure of $k(t_1,\ldots ,t_d)$ inside $K$.  Then $K'$ is an extension of $k$ and by construction $k$ is algebraically closed in $K'$ and $K'$ has a separating transcendence base.  Thus $K'$ is a regular extension of $k$.  Then since the Frobenius map is surjective on $k$ and since $K$ is a finite extension of $K'$, there is some $s\ge 1$ such that $L:=K^{p^s}:=\{a^{p^s}\colon a\in K\}$ is contained in $K'$ and contains $k$. Then $L$ is a regular extension of $k$ as it is contained in $K'$.  Moreover, $\sigma$ restricts to a non-identity automorphism of $L$ since $p^s$-th roots of elements are unique in $K$.  Then $K'$ is a regular extension of $k$.  

Since $K'/k$ is regular, it follows that 
$L:=K'\otimes_k \bar{k}$ is a finitely generated field extension of $\bar{k}$ and thus $L$ is the function field of a normal projective variety $X$ defined over $\bar{k}$.  Moreover, the automorphism $\sigma$ induces a non-identity automorphism of $L$ and hence induces a birational automorphism $\phi$ of $X$ that is not the identity. Let $t_1,\ldots ,t_d$ be generators for $K'$ as a field extension of $k$.   Since $K'\subseteq L=\bar{k}(X)$, we can regard each $t_i$ as a rational map from $X$ to $\mathbb{P}^1_{\bar{k}}$.  Then 
since $\sigma$ is not equal to the identity, we have $\sigma(t_i)\neq t_i$ for some $i$ and by reindexing, we may assume that $i=1$.  Then $\sigma(t_1)-t_1$ is a nonzero map from $X$ to $\mathbb{P}^1_{\bar{k}}$, and we let $Y$ denote the zero locus of this map, which is a proper subvariety of $X$. Let $\phi$ be the birational map of $X$ induced by $\sigma$,  let $I$ denote the indeterminacy locus of $\phi$ and  let $Z$ denote the closed subvariety of $X\setminus I$ given by $\{x\in X\setminus I \colon \phi(x)=x\}$.  We let $W$ denote a codimension-$1$ subvariety of $X$ such that $t_1,\ldots ,t_d,\sigma(t_1)$ are regular on $X\setminus W$.

Then $X$, $Z$, $I$, $Y$, $W$, and $\phi$ are defined over $\bar{k}$, but since they are defined in terms of a finite amount of geometric data, they are in fact defined over a finite field $k'$ for some power extension $k'$ of $k$, and $\phi$ is defined over $k$.  Let $q$ denote the size of $k$, and so the $q$-power Frobenius acts as the identity on $k$. Then $Z$, $I$, $Y$, and $W$ all have a finite orbit under the action of the $q$-power Frobenius map.  We let $V$ denote the union of the $q$-Frobenius orbits of $Z$, $I$, $Y$, and $W$.  Then $V$ is a proper subvariety of $X$ and by Hrushovski's result (Theorem \ref{thm:Hrush}) we have that there is some $\bar{k}$-point $x\in X\setminus V$ such that $\phi(x)=F^j(x)$ for some $j$, where $F$ is the $q$-power Frobenius. Notice that since $\phi$ is defined over $k$ and so $\phi(F^j(x))= F^j(\phi(x))$ and since $x\not \in I$, we see that $F^j(x)\not \in V$ by definition of $V$ and so $F^j(x)\not\in I$ and so we have $\phi^2(x)= F^{2j}(x)$ and similarly we have $\phi^m(x)=F^{jm}(x)$.  Since $x$ is a $\bar{k}$-point and $\phi$ is defined over $k$ and since the orbit of $x$ avoids the indeterminacy locus of $\phi$, it has finite orbit under the automorphism $F^j$ and so there is some $m$ such that $\phi^m(x)=x$.  Let $S=\{x_1,\ldots ,x_d\}$ denote the orbit of $x$ under $\phi$.  Then consider the semilocal ring $\mathcal{O}_{X,S}$, the collection of functions in $L$ that are regular at all points of $S$.  By construction $\mathcal{O}_{X,S}$ is invariant under $\sigma$ and $\mathcal{O}_{X,S}$ has a $\sigma$-invariant ideal $P_{X,S}$, consisting of functions that vanish on $S$.  Then
$\mathcal{O}_{X,S}/P_{X,S}\cong \bar{k}^d$.  
 We let $T$ denote the elements in $K'$ that are in $\mathcal{O}_{X,S}$. By construction, the functions $t_1,\ldots ,t_d,\sigma(t_1)$ are all regular on $S$ and so they are in $T$.  Thus $T$ has field of fractions equal to $K'$.  Furthermore, by construction $t_1-\sigma(t_1)\not\in P_{X,S}$ and so
 $\sigma$ induces a non-identity automorphism of $T/P_{X,S}\cap T$.  Notice that each element of $T$ is a rational function in $t_1,\ldots ,t_d$ that is regular at all points of $S$.  Moreover, modulo $P_{X,S}$, each element of $t_i$ maps to an element of $\bar{k}^d$ and so $B:=T/P_{X,S}\cap T$ is a finite reduced ring and $\sigma$ induces a non-identity automorphism of $B$.  Notice also that if $f\in T$ vanishes on some $x_i$ then since $K'$ is a regular extension of $k$, we have, as a rational map from $X$ to $\mathbb{P}^1_{\bar{k}}$, that $f$ is defined over $k$ and so $f\circ F^{n}(x_i)= F^n(f(x_i))=0$ and so $f$ vanishes on all of $S$.  Thus $P_{X,S}\cap T$ is a prime ideal and so 
 $B$ is a finite field.  
 Then by Proposition \ref{prop:PI}, there are pro-unipotent elements 
 $u,v\in {\rm Frac}(B[t;\sigma])$ that generate a free group of rank two.  Thus by Lemma \ref{lem:reduce}, there are pro-unipotent elements 
 $u',v' \in {\rm Frac}(T[t;\sigma])$ that generate a free group of rank two.  Then since ${\rm Frac}(T[t;\sigma])= {\rm Frac}(K[x;\sigma])$ we get the result in this case.
 \end{proof}

\section{Theorem \ref{thm: main2} in the case when $K$ is finitely generated over the rational numbers}
\label{sec:zc}
In this section we show that Theorem \ref{thm: main2} holds when $K$ is a finitely generated extension of $\mathbb{Q}$.  To do this, we show that we can create a $\sigma$-invariant local subalgebra of $K$ that surjects onto a finitely generated extension of a finite field; we then use the results of \S\ref{sec:pc} and Lemma \ref{lem:reduce} to obtain the result in this case.
\begin{lemma} Let $K$ be a finitely generated extension of $\mathbb{Q}$, let $\sigma$ be a field automorphism of $K$ that is not the identity automorphism, and let $k$ be the fixed field of $\sigma$.  If $K$ is a regular extension of $k$ then there is a $\sigma$-invariant local subring $S\subseteq K$ with maximal ideal $P$ such that:
\begin{enumerate}
\item $S/P$ is a finitely generated extension of a finite field;
\item $\sigma$ induces a non-identity automorphism $\bar{\sigma}$ of $S/P$.
\end{enumerate}
\label{lem:S}
\end{lemma}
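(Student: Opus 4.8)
The plan is to realize $K/k$ geometrically and to produce $S$ as the local ring at the generic point of a good special fibre of a model of $X$ over a ring of ``reduction mod $p$'' type, with $\sigma$-invariance built in because $\sigma$ fixes the base pointwise. First I would observe that the hypotheses force $\trdeg(K/k)\geq 1$: since $K$ is regular over $k$, the field $k$ is algebraically closed in $K$, so if $K/k$ were algebraic it would be trivial, contradicting $\sigma\neq\mathrm{id}$. Thus $K=k(X)$ for a positive-dimensional geometrically integral $k$-variety $X$, and $\sigma$ induces a non-trivial birational self-map $\Phi\colon X\dashrightarrow X$ defined over $k$, with $\Phi^{-1}$ coming from $\sigma^{-1}$. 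The point of working at the generic point of a special fibre (rather than seeking a $\sigma$-stable affine model) is that a birational automorphism of infinite order need not be biregular on any affine chart, so I deliberately avoid asking for a $\sigma$-equivariant model of $X$.

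Next I would spread everything out. Choose a finitely generated $\mathbb{Z}$-subalgebra $A\subseteq k$ with $\Frac(A)=k$; since $\sigma$ fixes $k$ pointwise it fixes $A$, and hence fixes every prime of $A$. Pick a projective model $\mathcal{X}\to\Spec(A)$ with generic fibre $X$ to which $\Phi,\Phi^{-1}$ spread out as birational maps $\Phi_A,\Phi_A^{-1}$ over $A$. Because the function field of a suitable finitely generated $A$-algebra model is the regular extension $K$ of $\Frac(A)=k$, Proposition \ref{prop:invert} supplies a nonzero $c\in A$ such that every fibre over a prime not containing $c$ is an integral domain; thus all special fibres over the open set $\{c\neq 0\}$ of $\Spec(A)$ are geometrically integral.

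Now I would select the prime. The indeterminacy loci of $\Phi_A$ and $\Phi_A^{-1}$, the polar loci of a generator $r$ of $K/k$ chosen with $\sigma(r)\neq r$ and of $\sigma(r)$, and the vanishing locus of the reduction of $\sigma(r)-r$ each meet the generic fibre in a proper closed subset, hence define proper closed subsets of $\Spec(A)$ away from which the relevant special fibre is unaffected. As $A$ is finitely generated over $\mathbb{Z}$, the scheme $\Spec(A)$ is Jacobson, so I can choose a maximal ideal $\mathfrak{q}$ avoiding $\{c=0\}$ together with all these loci; then $\mathbb{F}:=A/\mathfrak{q}$ is a finite field of some characteristic $p$. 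Let $\eta$ be the generic point of the special fibre $\mathcal{X}_{\mathfrak{q}}$ and set $S:=\mathcal{O}_{\mathcal{X},\eta}\subseteq K$ and $P:=\mathfrak{m}_\eta$. Then $S$ is local with residue field $S/P=\mathbb{F}(\mathcal{X}_{\mathfrak{q}})$, a finitely generated extension of the finite field $\mathbb{F}$, which gives (i); and since $r,\sigma(r)\in S$ with $\overline{\sigma(r)-r}\neq 0$ in $S/P$, once $\sigma$-invariance is known the induced $\bar\sigma$ satisfies $\bar\sigma(\bar r)=\overline{\sigma(r)}\neq\bar r$, giving (ii).

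The main point, and what I expect to be the principal obstacle, is the $\sigma$-invariance of $S$. Because $\sigma$ fixes $A$ it fixes $\mathfrak{q}$, so $\Phi_A$ carries the special fibre $\mathcal{X}_{\mathfrak{q}}$ to itself birationally; for $\mathfrak{q}$ chosen away from the bad-reduction locus the generic point $\eta$ lies outside the indeterminacy of both $\Phi_A$ and $\Phi_A^{-1}$, and since the induced map on $\mathcal{X}_{\mathfrak{q}}$ is dominant it must send $\eta$ to $\eta$. Hence $\Phi_A$ is a local isomorphism at $\eta$, so $\sigma(\mathcal{O}_{\mathcal{X},\eta})=\mathcal{O}_{\mathcal{X},\eta}$, that is $\sigma(S)=S$ and $\sigma(P)=P$, and $\bar\sigma$ is a genuine automorphism of $S/P$. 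The delicate work is exactly the control of the reduction of the indeterminacy of a birational (not biregular) map, so that $\eta$ persists as a fixed point for almost all $\mathfrak{q}$; concretely one must spread out the graph of $\Phi$, check that the various constructible bad loci have proper-closed generic fibres, and conclude that their union omits a dense open of $\Spec(A)$, which then meets the maximal spectrum by the Jacobson property.
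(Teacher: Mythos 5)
Your proposal is correct in substance, and it is worth noting that it ultimately constructs the \emph{same} ring as the paper: on an affine chart $\Spec R$ of your model, the local ring at the generic point $\eta$ of the special fibre over $\mathfrak{q}$ is exactly the localization $R_{\mathfrak{q}R}$, and the paper's $S_{PS}$ is precisely this (with your $A$ playing the role of $C$ and $\mathfrak{q}$ the role of $P$). Where you genuinely diverge is in how $\sigma$-invariance of this local ring is verified. The paper never spreads out the birational map geometrically; instead it rigs the affine model so that $\sigma^{\pm 1}$ applied to a fixed generating set lands back in (a localization of) the model after inverting a single element $h$, and then uses integrality of $S[1/h]$ over the polynomial ring $C[t_1,\ldots,t_d][1/h]$ to show that $\sigma^{\pm 1}(PS_{PS})$ contracts exactly to $PS[1/h]$ --- a purely ideal-theoretic prime-avoidance computation. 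You instead spread out $\Phi$, $\Phi^{-1}$ and the associated bad loci over $\Spec(A)$ and arrange that $\eta$ avoids them, so that $\Phi_A$ is a local isomorphism at $\eta$. Both mechanisms work, and both use Proposition \ref{prop:invert} and the Jacobson property of $\Spec(A)$ in the same way; your version is more conceptual and makes visible \emph{why} the lemma is true (good reduction of the birational map at the generic point of a special fibre), while the paper's version trades that geometric picture for explicit integrality arguments that require no spreading-out machinery. (Both proofs also quietly use that the fixed field $k$ is finitely generated, so that $A$, respectively $C$, exists with fraction field $k$; this is fine, since subfields of finitely generated fields are finitely generated.)

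One step in your third paragraph is, as literally written, circular: knowing only that $\eta$ avoids the indeterminacy loci of $\Phi_A$ and $\Phi_A^{-1}$ does not yet give that the induced map on $\mathcal{X}_{\mathfrak{q}}$ is dominant --- dominance is \emph{equivalent} to $\Phi_A(\eta)=\eta$, which is what you are trying to prove. The repair is exactly what you sketch in your closing paragraph: take the closure of the graph $\Gamma\subseteq \mathcal{X}\times_A\mathcal{X}$, let $\mathcal{V}\subseteq\mathcal{X}$ be the open locus over which both projections from $\Gamma$ are isomorphisms (it meets the generic fibre in a dense open since $\Phi$ is birational), and use semicontinuity of fibre dimension to ensure that $(\mathcal{X}\setminus\mathcal{V})_{\mathfrak{q}}$ is a proper closed subset of $\mathcal{X}_{\mathfrak{q}}$ for all $\mathfrak{q}$ in a dense open of $\Spec(A)$. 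Then $\Phi_A$ restricts to an isomorphism between dense opens of the integral fibre $\mathcal{X}_{\mathfrak{q}}$ (it maps the fibre into itself because $\sigma$ fixes $A$ pointwise), and such an isomorphism necessarily fixes the unique generic point, giving both $\Phi_A(\eta)=\eta$ and the local-isomorphism statement $\sigma(S)=S$, $\sigma(P)=P$. With that substitution your argument is complete; the remaining deferred facts (constructible bad loci having proper closed fibres over a dense open, finiteness of $A/\mathfrak{q}$) are standard spreading-out statements and pose no obstacle.
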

\begin{proof}   
We pick a finitely generated subring $C$ of $k$ such that ${\rm Frac}(C)=k$.  We pick a transcendence base $t_1,\ldots ,t_d$ for $K$ over $k$ Then $K$ is a finite-dimensional $k(t_1,\ldots ,t_d)$-vector space and we let $1=a_1,\ldots ,a_m$ be a basis for this space.   Since $k$ is algebraically closed in $K$ and $\sigma$ is not the identity, we may pick our transcendence base in such a way that $\sigma(t_1)\neq t_1$.

Then let $S$ denote the finitely generated $C$-algebra generated by $\sigma^j(t_i)$ and $\sigma^j(a_{\ell})$ for $j=-1,0,1$, $i=1,\ldots ,d$, and $\ell=1,\ldots ,m$.  We let $R=C[t_1,\ldots ,t_d]$.  Then there is some nonzero $h\in R$ such that $S[1/h]$ is integral over $R[1/h]$ and 
$S[1/\sigma(h)]$ is integral over $\sigma(R)[1/\sigma(h)]$ and $S[1/\sigma^{-1}(h)]$ is integral over $\sigma^{-1}(R)[1/\sigma^{-1}(h)]$.  
By Proposition \ref{prop:invert}, we can invert a single element of $C$ if necessary and assume that every prime ideal $P\in {\rm Spec}(C)$ has the property that $PS$ is a prime ideal of $S$ and $PS[1/\sigma^j(h)]$ is a prime ideal of $S[1/\sigma^j(h)]$ for $j=-1,0,1$, and so we may enlarge $C$ if necessary and assume that this is the case.

Now since $K$ is a algebraic over $k(t_1,\ldots ,t_d)$ and since ${\rm Frac}(S)=K$ and ${\rm Frac}(C[t_1,\ldots, t_d])=k(t_1,\ldots ,t_d)$, we see that there exists a nonzero polynomial $q(t_1,\ldots ,t_d)\in C[t_1,\ldots ,t_d]$ such that
$q \sigma^j(t_i), q \sigma^j(a_{\ell}), q\sigma^j(h)^{-1}\in S$ for $j=-2,\ldots ,2$ and $i=1,\ldots ,d$, and $\ell=1,\ldots ,m$.  Now since the intersection of primes of the form $PS$ with $P$ a maximal ideal of $C$ has trivial intersection, we may pick a maximal ideal $P$ of $C$ such that $(\sigma(t_1)-t_1)hq\not\in PS$.  Now we claim that $\sigma$ restricts to an automorphism of the local ring $S_{PS}$.  To see this, observe that
by construction each generator $u$ of the finite set of generators we have given for $S$ has the property that $\sigma(u),\sigma^{-1}(u)\in S_{PS}$, since $q\not\in PS$.  Furthermore, we have $\sigma(1/h)$ and $\sigma^{-1}(1/h)$ are in $S_{PS}$ by construction and so $\sigma$ and $\sigma^{-1}$ map $S[1/h]$ into $S_{PS}$.  Since $\sigma$ and $\sigma^{-1}$ are $C$-linear we also have $\sigma(PS[1/h])\subseteq PS_{PS}$ and $\sigma^{-1}(PS[1/h])\subseteq PS_{PS}$.  Now we claim that $\sigma^{-1}(PS_{PS})\cap S[1/h] = PS[1/h]$.  To see this, suppose that this is not the case.  Then since $\sigma(PS[1/h])\subseteq PS_{PS}$, we must have $\sigma^{-1}(PS_{PS})\cap S[1/h] =Q$ for some $Q\in {\rm Spec}(S[1/h])$ that properly contains $PS$.  Then since $S[1/h]$ is integral over $R[1/h]=C[t_1,\ldots ,t_d][1/h]$, we see that 
$Q\cap R[1/h]$ properly contains $SP\cap R[1/h]=PR[1/h]$. Thus $Q\cap R[1/h]$ contains a polynomial $f(t_1,\ldots ,t_d)\in C[t_1,\ldots ,t_d]$ that is not in $P[t_1,\ldots ,t_d]$.  But then by construction $\sigma(f)\in PS_{PS}$.  But notice that $\sigma$ maps $C[t_1,\ldots ,t_d]$ into $S$ and so $\sigma(f)\in PS_{PS}\cap S[1/h] = PS[1/h]$.  But we showed $\sigma^{-1}(PS[1/h])\subseteq PS_{PS}$ and so $f\in PS_{PS}$.  But since $f\in S[1/h]$ we see that $f\in S[1/h]\cap PS_{PS}= PS[1/h]$. In fact, we have $f\in R[1/h]$ and so $f\in PS[1/h]\cap R[1/h]$.  But this intersection is equal to $PR[1/h]$, which follows from the fact that $P$ is a prime ideal of $C$ and $PR[1/h]$ is a prime ideal of $R[1/h]$ since $R$ is a polynomial ring over $C$ and so $PS[1/h] = (PR[1/h])S[1/h]$ and by integrality we then have $PS[1/h]\cap R[1/h]=PR[1/h]$.  This is a contradiction.   It follows that $\sigma^{-1}(PS_{PS})\cap S[1/h]=PS[1/h]$ and similarly we have
$\sigma(PS_{PS})\cap S[1/h]=PS[1/h]$  But this means that the injective map $\sigma : S[1/h]\to S_{PS}$ sends elements outside of $PS[1/h]$ to units of $S_{PS}$ and so $\sigma$ restricts to an injective homomorphism from $S_{PS}$ to itself; similarly, $\sigma^{-1} :S_{PS}\to S_{PS}$ and so $\sigma$ restricts to an automorphism of $S_{PS}$.  Now since $C$ is a finitely generated $\mathbb{Z}$-algebra, we see that for maximal ideals $P$ of $C$ that $C/P$ is a finite field and $L:=S_{PS}/PS\cong {\rm Frac}(S/PS)$.  By construction $\sigma$-induces a $C/P$-algebra automorphism of $L$; this automorphism is not the identity since $\sigma(t_1)-t_1\in S$ and is not in $PS$ by construction and so we obtain the result. 
\end{proof}
From this we obtain the following result.
\begin{proposition} Let $K$ be a finitely generated extension of $\mathbb{Q}$ and let $\sigma$ be an automorphism of $K$.  Then there exist pro-unipotent elements $u$ and $v$ of $K(x;\sigma)$ such that $u$ and $v$ generate a free non-cyclic subgroup of the units group of $K(x;\sigma)$.
\label{prop:zc}
\end{proposition}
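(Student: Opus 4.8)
The plan is to reduce to the positive-characteristic case already handled in Proposition~\ref{prop:pc}, by manufacturing a $\sigma$-invariant local subring of $K$ that reduces to finite characteristic and then transporting a free subgroup back up via the lifting mechanism of Lemma~\ref{lem:reduce}; the bridge producing such a subring is exactly Lemma~\ref{lem:S}. Throughout I assume $\sigma\neq\mathrm{id}$, as in Theorem~\ref{thm: main2} (for $\sigma=\mathrm{id}$ the ring $K(x;\sigma)$ is commutative). First I would dispose of the finite-order case: if $\sigma$ has finite order then ${\rm Frac}(K[x;\sigma])$ is finite-dimensional over its centre and Proposition~\ref{prop:PI} applies verbatim. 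So from now on $\sigma$ has infinite order.

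The central point is that Lemma~\ref{lem:S} requires $K$ to be a regular extension of its fixed field $k=K^{\sigma}$, i.e.\ (in characteristic zero) that $k$ be algebraically closed in $K$, which need not hold for $K^{\sigma}$. The key reduction is to pass to a power of $\sigma$. I would introduce the subfield $F=\{\alpha\in K:\sigma^{n}(\alpha)=\alpha\text{ for some }n\geq1\}$ of elements with finite $\sigma$-orbit and verify two things. First, $F$ is algebraically closed in $K$: if $\beta\in K$ is algebraic over $F$, then for suitable $N$ the map $\sigma^{N}$ fixes the finitely many coefficients of the minimal polynomial of $\beta$ and hence permutes its roots, so $\beta$ has finite orbit and lies in $F$. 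Second, $F$ is finitely generated over $\mathbb{Q}$, being an intermediate field of the finitely generated extension $K/\mathbb{Q}$ (a classical fact: every intermediate field of a finitely generated field extension is finitely generated). Writing $F=\mathbb{Q}(\gamma_{1},\dots,\gamma_{r})$ and letting $m$ be the lcm of the $\sigma$-orbit lengths of the $\gamma_{j}$, one gets that $\sigma^{m}$ fixes $F$ pointwise, and since $K^{\sigma^{m}}\subseteq F$ always holds, in fact $K^{\sigma^{m}}=F$. Thus $K$ is a regular extension of $K^{\sigma^{m}}$, and $\sigma^{m}\neq\mathrm{id}$ because $\sigma$ has infinite order.

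It then remains to note that proving the statement for $\sigma^{m}$ (with the indeterminate $x^{m}$) suffices. The subring $K[x^{m};\sigma^{m}]\subseteq K[x;\sigma]$ is isomorphic to $K[y;\sigma^{m}]$, so ${\rm Frac}(K[x^{m};\sigma^{m}])$ embeds as a sub-division-ring of $K(x;\sigma)$; a pro-unipotent element there has the form $(1+a)(1+b)^{-1}$ with $a,b\in x^{m}K[x^{m};\sigma^{m}]\subseteq xK[x;\sigma]$, hence remains pro-unipotent in $K(x;\sigma)$, and a free non-cyclic subgroup of the smaller units group stays free in the larger one. So I may rename $\sigma^{m}$ as $\sigma$ and assume $K$ is a regular extension of $k=K^{\sigma}$. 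Now Lemma~\ref{lem:S} supplies a $\sigma$-invariant local subring $S\subseteq K$ with maximal ideal $P$ such that $B:=S/P$ is a finitely generated extension of a finite field and the induced $\bar\sigma$ is non-identity. Reduction modulo $P$ is a degree-preserving surjection $S[x;\sigma]\twoheadrightarrow B[x;\bar\sigma]$; Proposition~\ref{prop:pc} produces pro-unipotent $u,v\in{\rm Frac}(B[x;\bar\sigma])$ generating a free non-cyclic subgroup, and Lemma~\ref{lem:reduce} lifts these to elements $u',v'\in{\rm Frac}(S[x;\sigma])^{*}$ that are pro-unipotent with respect to $S$ and generate a free non-cyclic subgroup. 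Since ${\rm Frac}(S)=K$ we have ${\rm Frac}(S[x;\sigma])=K(x;\sigma)$, and pro-unipotence with respect to $S$ forces $u',v'$ to have the form $(1+a)(1+b)^{-1}$ with $a,b\in xK[x;\sigma]$, i.e.\ to be pro-unipotent, as required.

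I expect the main obstacle to be the regularity reduction in the second and third paragraphs: establishing that the finite-orbit field $F$ is both algebraically closed in $K$ and finitely generated, that $K^{\sigma^{m}}=F$ for an explicit $m$, and that the descent to $\sigma^{m}$ transports pro-unipotent free subgroups faithfully back to $K(x;\sigma)$. Once $K$ has been arranged to be regular over its fixed field, the remainder is a routine assembly of Lemma~\ref{lem:S}, Proposition~\ref{prop:pc}, and Lemma~\ref{lem:reduce}.
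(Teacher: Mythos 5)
Your proposal is correct and follows essentially the same route as the paper's proof: dispose of the finite-order case via Proposition~\ref{prop:PI}, then pass to a power $\sigma^m$ so that $K$ becomes regular over the fixed field (your finite-orbit field $F$ is exactly the paper's $k_0$, the algebraic closure of the fixed field in $K$), and conclude with Lemma~\ref{lem:S}, Proposition~\ref{prop:pc}, and Lemma~\ref{lem:reduce}. The only differences are cosmetic: you justify the regularity reduction directly through $F$ and the classical intermediate-field finiteness fact where the paper cites V\'amos, and you usefully spell out the descent from $\sigma$ to $\sigma^m$ (embedding of ${\rm Frac}(K[x^m;\sigma^m])$, preservation of pro-unipotence and freeness) that the paper's ``replace $x$ by $x^m$'' leaves implicit.
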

\begin{proof} If $\sigma$ has finite order on $K$ then we get the result from Proposition \ref{prop:PI}. Thus we may assume that $\sigma$ has infinite order.  Let $k$ denote the fixed field of $K$ and let $k_0$ denote the algebraic closure of $k$ in $K$.  Then since $K$ is finitely generated over $k$, we have $k_0$ is finitely generated over $k$ \cite{Vam} and so $k_0$ is a finite extension of $k$.  Then there is some $m>1$ such that $\sigma^m$ is the identity on $k_0$.  Then we may replace $x$ by $x^m$ and $\sigma$ by $\sigma^m$ and assume that the fixed field of $\sigma$ is algebraically closed in $K$ and thus $K$ is a regular extension of its fixed field.  Thus we can pick a $\sigma$-invariant local subring $S$ of $K$ satisfying the conditions of Lemma \ref{lem:S}.  Then $S[x;\sigma]$ surjects onto $(S/P)[x;\sigma]$ and the latter ring has the property that ${\rm Frac}((S/P)[x;\sigma])$ contains pro-unipotent elements that generate a free non-cyclic multiplicative subgroup by Proposition \ref{prop:pc}, and so ${\rm Frac}(S[x;\sigma])$ has this property too by Lemma \ref{lem:reduce}.  Thus we get the result in this case.
\end{proof}
\section{Proof of Theorems \ref{thm: main2} and \ref{thm: main1}}
\label{sec:main}
We now prove the main result of this paper.  We begin by completing the proof of the automorphism case of Theorem \ref{thm: main1}.

\begin{proof}[Proof of Theorem \ref{thm: main2}]
We let $k_0$ denote the prime subfield of $k$ and we pick $c\in K$ such that $\sigma(c)\neq c$.  Then let $K'=k_0(\{\sigma^n(c)\colon n\in \mathbb{Z}\})$.  Then $\sigma$ induces a non-identity automorphism of $K'$ and since $K'(t;\sigma)$ embeds in $K(t;\sigma)$ it suffices to prove the result for $K'(t;\sigma)$.  

If $K'$ is not finitely generated as an extension of $k_0$ then at least one of $K_1:=k_0(\{\sigma^n(c)\colon n\ge 0\})$ and $K_2:=k_0(\{\sigma^n(c)\colon n\le 0\})$ must be non-finitely generated as a field extension of $k_0$.  Without loss of generality, we may assume that $K_1$ is not finitely generated as an extension of $k_0$.  Then by \cite[Proposition 3.2]{BR12}\footnote{We note that the argument in this paper was actually supplied by the anonymous referee.} we have that the elements 
$cx$ and $cx^2$ generate a free $k_0$-subalgebra of $K(x;\sigma)$, and hence $a:=c\sigma(c) x^3 = (cx)(cx^2)$ and $b:=c\sigma^2(c)x^3=(cx^2)(cx)$ generate a free $k_0$-subalgebra of $K(x;\sigma)$.  But by a result of S\'anchez \cite[Theorem pp. 2--3]{Sa} we then have that $1+a$ and $1+b$ generate a free non-cyclic subgroup of $K(x;\sigma)^*$ and these elements are evidently pro-unipotent.  Thus we may assume that $K'$ is a finitely generated extension of $k_0$ and so it suffices to consider the case when $K$ is a finitely generated extension of its prime subfield.
If $k_0$ has characteristic zero, then we get the result from Proposition \ref{prop:zc}; if $k_0$ has positive characteristic then the result follows from Proposition \ref{prop:pc}.
The result now follows.
\end{proof}
 
 We can now immediately deduce Thoerem \ref{thm: main1}.
\begin{proof}[Proof of Theorem \ref{thm: main1}] A folklore result says that $K(x;\sigma,\delta)$ is isomorphic to a division ring of the form $K(x;\sigma)$ or $K(x;\delta)$ with $\sigma$ an automorphism and $\delta$ a derivation (see \cite[Lemma 5.1]{BR12} for a proof).  In the case when $D=K(x;\delta)$ and $\delta\neq 0$, there is some $c\in K$ such that $b:=\delta(c)\neq 0$.  Then $[b^{-1}x,c]=1$ and so $D$ contains a copy of the Weyl division algebra.  It is known that the Weyl division algebra contains a free non-cyclic multiplicative subgroup regardless of the characteristic of the base field and so we get the result in this case.  The automorphism case follows from Theorem \ref{thm: main2}.
\end{proof}

 \section{Application to group algebras}
 \label{sec:app}
In this section, we give as an application a result about the existence of free subgroups in group algebras of solvable groups.  We note that when $G$ is a torsion-free solvable group, we have that the group algebra $k[G]$ is a domain and has a quotient division ring ${\rm Frac}(k[G])$ (see \cite{Pass, KLM}). 
\begin{proof}[Proof of Corollary \ref{corollary:main}]
 We first consider the case when $G$ is a torsion-free non-abelian solvable group.  To show that ${\rm Frac}(k[G])$ contains a free group of rank two, first observe that $G$ contains a non-abelian metabelian subgroup $H$ and since ${\rm Frac}(k[H])$ embeds in ${\rm Frac}(k[G])$, it is enough to consider the case when $G$ is metabelian.  We pick $a,b\in G$ that do not commute and since ${\rm Frac}(k[\langle a,b\rangle])$ embeds in ${\rm Frac}(k[G])$, we can further assume that $G$ is finitely generated.  Let $A=G'$ and pick $x\in G$ that does not commute with every element of the normal subgroup $A$.  If $x^m\in A$ for some $m\ge 1$, then letting $B=\langle A,x\rangle$, we see that $B$ is a non-abelian group that is abelian-by-finite and so ${\rm Frac}(k[B])$ is finite-dimensional over its centre and hence contains a free group of rank two by \cite{Li2, Gon} and thus ${\rm Frac}(k[G])$ contains a free group of rank two.  Thus we may assume that $x^m\not\in A$ for $m\ge 1$.  Thus $E:=\langle A,x\rangle \cong A\rtimes \mathbb{Z}$ and so $k[E]\cong k[A][t,t^{-1};\sigma]$, where $\sigma$ is the non-identity automorphism of $k[A]$ induced by conjugation by $x$.  Then ${\rm Frac}(k[E])$ contains a free group of rank two by Theorem \ref{thm: main1} and so ${\rm Frac}(k[G])$ does since by construction it contains ${\rm Frac}(k[E])$.  The result follows in this case. 
 
 When $G$ is a non-abelian solvable-by-finite, either $G$ is has a non-abelian finite-index solvable subgroup in which case we get the result from the above, or $G$ is abelian-by-finite in which case $k[G]$ is a finite module over its centre and we get the result from \cite{Li2, Gon}.
\end{proof}
%[GD66]
%[Har77] [Mat86] [Sha94]
%, E?l?ements de g?eom?etrie alg?ebrique: IV. E?tude locale des sch?emas et des morphismes de sch?emas, troisi`eme partie, Publications math?ematiques de l?I.H.E?.S., vol. 28, Institut des Hautes E?tudes Scientifiques, 1966.


\begin{thebibliography}{BBC17}
\bibitem[Ame11]{Ame} E. Amerik, 
Existence of non-preperiodic algebraic points for a rational self-map of infinite order. 
\emph{Math. Res. Lett.} {\bf 18} (2011), no. 2, 251--256.

\bibitem[BG16]{BG16}
J. P. Bell and J. Z. Gon\c calves, Free algebras and free groups in Ore extensions and free group algebras in division rings. \emph{J. Algebra} {\bf 455} (2016), 235--250. 
%\bibitem[BR14]{BR14} J. P. Bell and D. Rogalski, Free subalgebras of division algebras over uncountable fields. \emph{Math. Z.} {\bf 277} (2014), no. 1-2, 591--609.
\bibitem[BR12]{BR12} J. P. Bell and D. Rogalski, Free subalgebras of quotient rings of Ore extensions. \emph{Algebra Number Theory} {\bf 6} (2012), no. 7, 1349--1367.
\bibitem[Chi96]{Chi} K. Chiba, Free subgroups and free subsemigroups of division rings. \emph{J. Algebra} {\bf 184} (1996) 570--574.
\bibitem[FGF18]{FGF18} V. O. Ferreira, J. Z. Goncalves and E. Z. Fornaroli, Free algebras in a division ring with an involution. \emph{J. Algebra} {\bf{509}}, (2018), 292--306.
\bibitem[FGS96]{FGS} L. M Figueiredo, J. Z. Gon\c calves, and M. Shirvani, Free group algebras in certain division rings. \emph{J. Algebra}  {\bf 185} (1996) 298--313.
\bibitem[Gon84]{Gon} J. Z. Goncalves, Free groups in subnormal subgroups and the residual nilpotence of the group of units of group rings. \emph{Canad. Math. Bull.} {\bf 27} (1984) 365--370.
%\bibitem[GP96]{GP} J. Z. Gon\c calves and D. S. Passman, Construction of free subgroups in the group of units of modular group algebras. \emph{Comm. Algebra} {\bf 24} (1996), no. 13, 4211--4215.
\bibitem[Gro66]{GD} Grothendieck, A. 'El\'ements de g\'eom\'etrie alg\'ebrique. IV. \'Etude locale des sch\'emas et des morphismes de 
sch\'emas. III. \emph{Inst. Hautes \'Etudes Sci. Publ. Math.} No. 28 1966, 255 pp. 

 \bibitem[Hru04]{Hr} E. Hrushovski, \emph{The elementary theory of the Frobenius automorphism}, 2004. Available at arXiv:0406514v1.
 \bibitem[KLM88]{KLM}  P. H. Kropholler, P. A. Linnel and J. A. Moody, Applications of a new K-theoretic theorerm to soluble group rings. \emph{Proceedings of AMS}
 {\bf{104}} (1988) 3, 675-684.
 \bibitem[Lic77]{Li} A. Lichtman, On subgroups of the multiplicative group of skew fields. \emph{Proc. Amer. Math. Soc.} {\bf 63} (1977), no. 1, 15--16.
 \bibitem[Lic78]{Li2} A. Lichtman, Free subgroups of normal subgroups of the multiplicative group of skew fields. 
\emph{Proc. Amer. Math. Soc.} {\bf 71} (1978), no. 2, 174--178. 
 \bibitem[Lic82]{Li3} A. Lichtman,
On normal subgroups  of the multiplicative group of skew fields generate by poly cyclic-by-finite groups. \emph{J. Algebra} {\bf{78}} (1982), 548--577.
 \bibitem[Lic84]{Li4} A. Lichtman, 
 Matrix rings and linear groups over a field of fractions of enveloping algebras and group rings. \emph{J. Algebra} {\bf{88}} (1984), 1--37.
 \bibitem[Lor86]{Lor86} M. Lorenz,
 On free subalgebras of certain division rings. \emph{Proceedings AMS} {\bf{98}} (1986), 401--405.
 \bibitem[ML83]{ML1} L. Makar-Limanov,
 The skew field of fractions of the  Weyl algebra contains free noncommutative algebra. \emph{Comm. Algebra} {\bf{17}} (1983), 2003--2006.
 \bibitem[ML84]{ML2} \bysame,
 On free subsemigroups of skew fields. \emph{Proceedings AMS} {\bf{91}} (1984), 2, 189--191.
 \bibitem[ML84A]{ML3} \bysame,
 On free subobjects of skew fields. \emph{Methods in ring theory}. NATO ASI, Series C {\bf{233}}. Reidel, Boston, 1984, 281--285.
 \bibitem[ML84B]{ML4} \bysame,
 On the group rings of nilpotent groups. \emph{Israel J. Math.} {\bf{48}} (1984), 245--248.
 \bibitem[ML91]{ML5} \bysame, 
 On subalgebras of the first Weyl skew field. \emph{Comm. Algebra} {\bf{91}} (1991), 1971--1982.
 \bibitem[Pass11]{Pass} D. S. Passman. \emph{The algebraic structure of group rings.} Dover Publications Inc., Mineola, New York, 2011.
 \bibitem[S\'an18]{Sa} J. S\'anchez, \emph{Free group algebras in division rings with valuation I}. Available at arXiv:1806.10711.
\bibitem[V\'am79]{Vam} P. V\'amos, 
On the minimal prime ideal of a tensor product of two fields. 
\emph{Math. Proc. Cambridge Philos. Soc.} {\bf 84} (1978), no. 1, 25--35. 

 \end{thebibliography}
\end{document}